\newtheorem{theorem}{Theorem}[section] 
\newtheorem{lemma}[theorem]{Lemma}     
\newtheorem{proposition}[theorem]{Proposition}
\newcommand{\url}[1]{\texttt{#1}}
\newcommand{\nrd}{\mathrm{nrd}}
\newcommand{\GL}{\mathrm{GL}}
\newcommand{\SL}{\mathrm{SL}}
\newcommand{\PGL}{\mathrm{PGL}}
\newcommand{\PSL}{\mathrm{PSL}}
\newcommand{\Cl}{\mathrm{Cl}}
\newcommand{\fld}{K}
\newcommand{\fldfin}[1]{\kappa_{#1}}
\newcommand{\fldext}{L}
\newcommand{\fldb}{F}
\newcommand{\alg}{A}
\newcommand{\secompl}{L(\Delta)^{O(1)}}
\newcommand{\clalg}{\Cl_{\alg}(\fld)}
\newcommand{\fb}{\mathcal{B}}
\newcommand{\order}{\mathcal{O}}
\newcommand{\orderb}{\mathcal{O}}
\newcommand{\ring}{\mathcal{R}}
\newcommand{\mat}{\mathcal{M}}
\newcommand{\tree}{\mathcal{T}}
\newcommand{\Z}{\mathbb{Z}}
\newcommand{\Proj}{\mathbb{P}}
\newcommand{\Q}{\mathbb{Q}}
\newcommand{\R}{\mathbb{R}}
\newcommand{\C}{\mathbb{C}}
\newcommand{\Hamil}{\mathbb{H}}
\newcommand{\idl}{\mathfrak{a}}
\newcommand{\idlb}{\mathfrak{b}}
\newcommand{\tsidl}{\mathfrak{I}}
\newcommand{\cnd}{\mathfrak{f}}
\newcommand{\prm}{\mathfrak{p}}
\newcommand{\Prm}{\mathfrak{P}}
\newcommand{\act}{\cdot}
\newcommand{\sto}{\leftarrow}
\newcommand{\floor}[1]{\lfloor #1 \rfloor}
\newcommand{\gp}[1]{\langle #1 \rangle}
\newcommand{\disc}{{d}}
\newcommand{\na}{\mathcal{N}}
\newcommand{\markscale}{1.7}
\newcommand{\scsh}{2.5cm}
\tikzset{btt/.style={%
  thick,
  scale=1.2,
  grow cyclic,
  level distance=1.cm,
  level 1/.style={sibling angle=120},
  level 2/.style={sibling angle=90},
  level 3/.style={sibling angle=90},
  level 4/.style={sibling angle=55},
  nodes={circle,draw,inner sep=+0pt, minimum size=4pt}
}}
\newcommand{\levelc}{%
  child[densely dotted] foreach \cntV in {1,2} {}
}
\newcommand{\levelq}{%
  child[solid] foreach \cntIV in {1,2} {
    node[solid]{}
    \levelc
  }
}
\newcommand{\levelt}{%
  child[solid] foreach \cntIII in {1,2} {
    node[solid]{{}}
    \levelq
  }
}
\newcommand{\leveld}{%
  child[solid] foreach \cntII in {1,2} { 
    node[solid] {}
    \levelt
  }
}
\newcommand{\levelu}{%
  \path[rotate=30]
  node[solid] {}
  child[solid] foreach \cntI in {1,2,3} {
    node[solid] {}
    \leveld
  };
}
\newcommand{\INPUT}{\REQUIRE}
\newcommand{\OUTPUT}{\ENSURE}
\newcommand{\LINEIF}[2]{%
    \STATE\algorithmicif\ {#1}\ \algorithmicthen\ {#2} \algorithmicend\ \algorithmicif%
}
\title[The principal ideal problem in quaternion algebras]
 {An algorithm for the principal ideal problem\\in indefinite quaternion algebras} 
\author{A. Page}
\begin{document}
\maketitle


\begin{abstract}
  Deciding whether an ideal of a number field is principal and finding a generator is a fundamental problem with many applications in computational number theory. For indefinite quaternion algebras, the decision problem reduces to that in the underlying number field. Finding a generator is hard, and we present a heuristically subexponential algorithm.
\end{abstract}

\textbf{Keywords}: quaternion algebra, principal ideal algorithm, factor base, Bruhat-Tits tree.

\section{Introduction}
Automorphic forms and their Hecke eigenvalues are of tremendous importance in number theory. These eigenvalues carry a lot of interesting arithmetic information, such as the number of points on elliptic curves or traces of Frobenius in Galois representations. One of the most successful methods for computing automorphic forms for~$\GL_2$ over number fields uses the Jacquet-Langlands correspondence.
This result transfers the problem to a quaternion algebra, in which it is often easier to solve. This approach has its roots in the theory of Brandt matrices and has been successfully used by Demb\'el\'e-Donnelly 
and Greenberg-Voight~\cite{vd} 
to compute Hecke eigenvalues of Hilbert modular forms.
In both methods, a crucial step is to test whether an ideal is principal and to produce a generator in this case: this is the \emph{principal ideal problem} that we are considering in this paper.
\par The principal ideal problem naturally splits into two cases: definite and indefinite algebras. In the definite case, Demb\'el\'e and Donnelly described an algorithm and Kirschmer and Voight proved that this algorithm runs in polynomial time when the base field is \emph{fixed}, so we focus on the remaining indefinite case. In that case, testing whether an ideal is principal reduces to the same problem over the base field by Eichler's theorem (Theorem~\ref{thmeichlerclass}), but finding a generator is difficult. Kirschmer and Voight~\cite{kv} provide an algorithm that improves on naive enumeration\footnote{Trying every linear combination of a basis until we find a generator.}, without analysing its complexity.
\par In this paper, we present a probabilistic algorithm using a factor base and an auxiliary data structure to solve the principal ideal problem. Our algorithm is inspired by Buchmann's algorithm~\cite{buchmann} for computing the class group of a number field. However, it is not easy to adapt this technique to quaternion algebras. Indeed, the set of right ideals of an order does not form a group under multiplication. %
In fact, for most pairs of ideals, multiplication is not well-defined. We are able to salvage the factor base technique in the case of indefinite quaternion algebras by algorithmically realizing the strong approximation property (Theorem~\ref{thmstrongapprox}). %
The main point is that if every ideal were two-sided, Buchmann's method would work unchanged. Our algorithm is divided in two parts. Because the algebra is indefinite, every ideal is equivalent to an ``almost two-sided'' ideal: a local algorithm (Algorithm~\ref{preduce}) makes this equivalence effective. The global algorithm (Algorithm~\ref{greduce}) uses a factor base: by linear algebra it cancels out the valuations of the \emph{norm} of the ideal and then corrects the ideal locally at every prime to make it two-sided.
We implemented our algorithm as in Magma. It performs well in practice, compared to the built-in Magma function implementing Kirschmer and Voight's algorithm.

\par The paper is organized as follows. We first recall basic properties of quaternion algebras, Eichler's theorems and Bruhat-Tits trees in Section~\ref{secrappels}. We then proceed to algorithms in Section~\ref{secalgos}. In Section~\ref{secred}, we define local and global reduction structures and Algorithm~\ref{isprincipal}, solving the principal ideal problem. In Section~\ref{secstruct}, Algorithm~\ref{gbuild} constructs the needed local and global reduction structures: %
the first one uses units constructed from commutative suborders, and the second one is inspired by Buchmann's algorithm. In Section~\ref{seccr}, we introduce a compact representation for quaternions to prevent coefficient explosion in the previous algorithms. Section~\ref{seccomplexity} provides a complexity analysis of our algorithms: assuming suitable heuristics, we prove a subexponential running time. Section~\ref{secex} presents examples.%

\section{Background on quaternion algebras and Bruhat-Tits trees}\label{secrappels}

When~$G$ is a group and~$S\subset G$ is a subset, we write~$\gp{S}$ for the subgroup generated by~$S$. When the group~$G$ acts on a set~$X$, we say that~$S$ acts transitively on~$X$ if~$\gp{S}$ does.

\subsection{Quaternion algebras}
Good references for this section and the next one are~\cite{kv},~\cite{mfv} and~\cite{voightbook}. Let~$\fld$ be a number field with ring of integers~$\Z_\fld$ and discriminant~$\disc_\fld$. We write~$N:\fld\to\Q$ for the norm. 
Let~$\prm$ be a prime of~$\Z_\fld$. We write~$\fld_\prm$ for the~$\prm$-adic completion of~$\fld$, we let $v_\prm$ be the $\prm$-adic valuation and we write~$\fldfin{\prm}$ for the residue field~$\Z_\fld/\prm$. When~$S$ is a set of primes of~$\Z_\fld$, we write~$\Z_{\fld,S}$ the ring of~$S$-integers in~$\fld$.
\par Let~$\alg$ be a quaternion algebra over~$\fld$ with reduced norm~$\nrd$. Let~$v$ be a place of~$\fld$. The place~$v$ is \emph{split} or~\emph{ramified} according to whether~$\alg\otimes_{\fld}\fld_v\cong\mat_2(\fld_v)$ or not. The \emph{reduced discriminant~$\delta_\alg$} of~$\alg$ is the product of the ramified primes and its \emph{absolute discriminant} is the integer~$\Delta_\alg=\disc_\fld^4N(\delta_\alg)^2$.
Let~$\order$ be a maximal order in~$\alg$. We write~$\order^1$ for the group~$\{x\in\order\ |\ \nrd(x)=1\}$. A \emph{lattice}~$I\subset\alg$ is a finitely generated~$\Z_\fld$-submodule such that~$\fld I=\alg$. The \emph{right order}~$\order_r(I)$ of~$I$ is the set~$\{x\in\alg\ |\ Ix\subset I\}$ and the \emph{left order~$\order_l(I)$} is defined analogously. A \emph{right $\order$-ideal} is a lattice~$I$ such that~$\order_r(I)=\order$. The ideal~$I$ is \emph{integral} if~$I\subset\order$ and~$I$ is \emph{two-sided} if~$\order_l(I)=\order_r(I)$. The inverse~$I^{-1}$ of~$I$ is~$\{x\in\alg\ |\ xI\subset\order\}$. If~$I,J$ are lattices such that~$\order_r(I)=\order_l(J)$, we define their product~$IJ$ to be the lattice generated by the set~$\{xy\ :\ x\in I,\ y\in J\}$. If~$I$ is an~$\order$-ideal we have~$II^{-1}=\order_l(I)$ and~$I^{-1}I=\order_r(I)$. The \emph{reduced norm~$\nrd(I)$} of an $\order$-ideal~$I$ is the~$\Z_\fld$-module generated by the reduced norms of elements in~$I$. The reduced norm of ideals is 
multiplicative. %
For a right~$\order$-ideal~$I$ we define~$\na(I)=N(\nrd(I))$ and for an element~$x\in\alg^\times$ we set~$\na(x)=\na(x\order)$.
Let~$\prm$ be a prime of~$\Z_\fld$. There exists a unique two-sided~$\order$-ideal~$\Prm$ such that every two-sided $\order$-ideal having reduced norm a power of~$\prm$ is a power of~$\Prm$. We have~$\Prm=\prm\order$ if~$\prm$ splits in~$\alg$ and~$\Prm^2=\prm\order$ if~$\prm$ ramifies in~$\alg$: such an ideal~$\Prm$ is called a prime of~$\order$, and every two-sided $\order$-ideal is a product of primes of~$\order$.
The set of right~$\order$-ideals is equipped with an action of the group of two-sided~$\order$-ideals by multiplication on the right and an action of the group~$\alg^\times$ by multiplication on the left. Two right~$\order$-ideals~$I,J$ are \emph{equivalent} if there exists~$x\in\alg^\times$ such that~$xI=J$, that is if they lie in the same orbit modulo~$\alg^\times$. The set of equivalence classes of right~$\order$-ideals is written~$\Cl(\order)$. An ideal is~\emph{principal} if it is equivalent to the unit ideal~$\order$. If~$S$ is a set of primes of~$\Z_\fld$, the~\emph{$S$-order} associated with~$\order$ is the ring~$\order_S=\Z_{\fld,S}\order$ and the group of~\emph{$S$-units} (relative to~$\order$) in~$\alg$ is~$\order_S^\times$.

\subsection{Eichler's condition and theorems}

A quaternion algebra~$\alg$ satisfies the \emph{Eichler condition} or is~\emph{indefinite} if there exists an infinite place of the base field~$\fld$ at which~$\alg$ is split. Indefinite algebras satisfy the following properties.

\begin{theorem}[(Consequence of strong approximation)]\label{thmstrongapprox}
  Let~$\order$ be a maximal order in a quaternion algebra~$\alg$ over a number field~$\fld$, satisfying the Eichler condition. Let~$\prm$ be a prime of~$\Z_\fld$ that splits in~$\alg$ and~$k$ a positive integer. Then the map
  \[\order^1 \longrightarrow \SL_2(\Z_\fld/\prm^k)\]
  is surjective.
\end{theorem}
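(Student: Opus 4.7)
The plan is to deduce the surjectivity from the strong approximation theorem applied to the algebraic group $G = \SL_1(\alg)$ (norm one elements of $\alg$), which is simply connected and semisimple. Because $\alg$ is indefinite, there is an infinite place $v_0$ of $\fld$ at which $\alg$ splits, so $G(\fld_{v_0}) \cong \SL_2(\R)$ is noncompact. Strong approximation then says that $G(\fld)$ is dense in the restricted product $\prod_{v\neq v_0}' G(\fld_v)$, where the product is taken with respect to the compact open subgroups $G(\Z_{\fld,v}) = \order_v^1$ at the finite places (using the maximal order $\order$ to fix an integral structure).

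First I would translate the target group into local terms. Since $\prm$ splits in $\alg$, we have an isomorphism $\order \otimes_{\Z_\fld} \Z_{\fld,\prm} \cong \mat_2(\Z_{\fld,\prm})$, which identifies $\order_\prm^1$ with $\SL_2(\Z_{\fld,\prm})$. The reduction map $\SL_2(\Z_{\fld,\prm}) \to \SL_2(\Z_\fld/\prm^k)$ is surjective (smoothness of $\SL_2$ over $\Z$, or a direct matrix computation using elementary matrices), so any target element $\bar g \in \SL_2(\Z_\fld/\prm^k)$ lifts to some $g_\prm \in \order_\prm^1$.

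Next I would apply strong approximation to the following open set of the restricted adelic product over places $v \neq v_0$: take the open subgroup
\[ U \;=\; \bigl(g_\prm \cdot (1 + \prm^k \mat_2(\Z_{\fld,\prm}))\bigr) \cap \order_\prm^1 \;\times\; \prod_{w\nmid\infty,\ w\neq\prm} \order_w^1 \;\times\; \prod_{v\mid\infty,\ v\neq v_0} G(\fld_v). \]
This is a nonempty open subset of the restricted product, so by strong approximation there exists $x \in G(\fld) = \alg^1$ whose image lies in $U$. Being in $\order_w^1$ for every finite place $w$, the element $x$ lies in $\order^1$; and its reduction modulo $\prm^k$ equals that of $g_\prm$, namely $\bar g$. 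This proves surjectivity.

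The only real subtlety, and the step I would be most careful about, is the formulation of strong approximation: one must exclude exactly one noncompact place (here $v_0$) from the restricted product, and the open neighborhood $U$ at the remaining places must simultaneously encode the desired congruence at $\prm$ and the integrality constraint $x \in \order_w^1$ at every other finite place $w$. Once these are set up correctly, the theorem applies directly and the surjectivity drops out immediately.
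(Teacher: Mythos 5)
Your argument is correct: the paper states Theorem~\ref{thmstrongapprox} as background without proof, labelling it precisely as a consequence of strong approximation for the norm-one group, and your deduction (noncompactness of $\SL_1(\alg)$ at a split infinite place, lifting the target to $\order_\prm^1\cong\SL_2(\Z_{\fld,\prm})$, then intersecting $\alg^1$ with an open set imposing the congruence at $\prm$ and integrality elsewhere) is exactly the standard route the paper intends. Only cosmetic quibbles: the split infinite place may be complex, so $G(\fld_{v_0})$ could be $\SL_2(\C)$ rather than $\SL_2(\R)$ (still noncompact), and your set $U$ is an open subset rather than a subgroup; neither affects the proof.
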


\begin{theorem}[(Integral version of Eichler's norm theorem)]\label{thmeichlernorm}
  Let~$\order$ be a maximal order in a quaternion algebra~$\alg$ over a number field~$\fld$ satisfying the Eichler condition. Let~$S$ be a finite set of primes of~$\Z_\fld$. Let~$\Z_{\fld,S,\alg}^\times$ be the set of~$S$-units that are positive at every real place of~$\fld$ that ramifies in~$\alg$. Then the reduced norm
  \[\nrd : \order_S^\times\longrightarrow \Z_{\fld,S,\alg}^\times\]
  is surjective.
\end{theorem}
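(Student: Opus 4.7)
The plan is to combine the classical (non-integral) Eichler norm theorem of Hasse--Schilling--Maass with strong approximation for $\alg^1$; Theorem~\ref{thmstrongapprox} is one concrete instance of the latter.

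First, I would invoke Hasse--Schilling--Maass. Any $u \in \Z_{\fld,S,\alg}^\times$ is in particular positive at every real place of $\fld$ ramified in $\alg$, so the image of the reduced norm $\nrd : \alg^\times \to \fld^\times$ contains $u$, yielding some $x \in \alg^\times$ with $\nrd(x) = u$. The remaining task is to find $w \in \alg^1$ such that $y := xw$ lies in $\order_S^\times$.

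Next I would examine the local picture. Let $T$ be the finite set of primes $\prm \notin S$ at which $x_\prm \notin \order_\prm^\times$. For $\prm \in T$, the required condition $x_\prm w_\prm \in \order_\prm^\times$ is equivalent to $w_\prm \in V_\prm := x_\prm^{-1}\order_\prm^\times \cap \alg_\prm^1$. Since $u_\prm \in \Z_{\fld,\prm}^\times$ and the local reduced norm $\order_\prm^\times \to \Z_{\fld,\prm}^\times$ is surjective in both the split case (where it is the determinant on $\GL_2(\Z_{\fld,\prm})$) and the ramified case (where $\order_\prm$ is the valuation ring of a local division algebra), the set $V_\prm$ is non-empty, and in fact it is a right coset of $\order_\prm^1$ and hence open in $\alg_\prm^1$. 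For $\prm \notin S \cup T$ the condition $x_\prm w_\prm \in \order_\prm^\times$ is automatic as soon as $w_\prm \in \order_\prm^1$, since $x_\prm$ is already a local unit.

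Finally, I would invoke strong approximation for $\alg^1$, valid because $\alg$ satisfies the Eichler condition: the image of $\alg^1$ is dense in the restricted product of the $\alg_\prm^1$ with respect to the compact open subgroups $\order_\prm^1$. This produces a global $w \in \alg^1$ lying in $V_\prm$ for each $\prm \in T$ and in $\order_\prm^1$ for every other finite prime outside $S$. Setting $y := xw$, one has $\nrd(y) = u$ and $y_\prm \in \order_\prm^\times$ for every $\prm \notin S$, so $y \in \order_S^\times$, giving $u$ in the image of $\nrd$. The main obstacle is invoking strong approximation in precisely this multi-prime adelic form, allowing arbitrary open conditions at finitely many primes (including possibly ramified ones) together with integrality outside $S$; Theorem~\ref{thmstrongapprox} gives such a shape at a single split prime, and one must extract the general version from the underlying strong approximation theorem for the simply-connected group $\alg^1$.
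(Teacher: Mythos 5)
Your argument is correct: Hasse--Schilling--Maass gives $x\in\alg^\times$ with $\nrd(x)=u$, and strong approximation for $\alg^1$ (valid by the Eichler condition, since $\alg^1$ is noncompact at a split infinite place) lets you multiply by $w\in\alg^1$ landing in the nonempty open cosets $V_\prm$ at the finitely many bad primes and in $\order_\prm^1$ elsewhere outside $S$, so $xw\in\order_S^\times$. The paper itself gives no proof of this theorem --- it is quoted as background from the standard references (Vign\'eras, Kirschmer--Voight, Voight's book) --- and your proof is essentially the standard one found there; you are also right that the paper's Theorem~\ref{thmstrongapprox} (surjectivity onto $\SL_2(\Z_\fld/\prm^k)$ at a single split prime) is only a special consequence and cannot replace the full adelic form of strong approximation needed here, since the cosets $V_\prm$ are disjoint from $\order_\prm^1$ whenever $x_\prm\notin\order_\prm^\times$.
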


\begin{theorem}[(Eichler)]\label{thmeichlerclass}
  Let~$\order$ be a maximal order in a quaternion algebra~$\alg$ over a number field~$\fld$ satisfying the Eichler condition. Let~$\clalg$ be the ray class group with modulus the product of the real places of~$\fld$ that ramify in~$\alg$. Then the reduced norm induces a bijection
  \[\Cl(\order)\xrightarrow{\sim}\clalg\text{.}\]
\end{theorem}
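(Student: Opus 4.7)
The plan is to show that the map $\varphi\colon [I]\mapsto[\nrd(I)]$ is a well-defined bijection from $\Cl(\order)$ to $\clalg$, breaking the verification into well-definedness, surjectivity, and injectivity. Well-definedness is the easiest: if $I = xJ$ with $x\in\alg^\times$, then $\nrd(I) = \nrd(x)\nrd(J)$, and at every real place $v$ of $\fld$ ramifying in $\alg$ the completion $\alg\otimes_\fld\fld_v$ is Hamilton's quaternions, on which the reduced norm is positive definite; thus $\nrd(x)>0$ at $v$ and $(\nrd(x))$ is trivial in $\clalg$.

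For surjectivity, I would represent any ray class by an integral ideal $\idl=\prod_i\prm_i^{e_i}$ of $\Z_\fld$ and assemble a right $\order$-ideal of reduced norm $\idl$ from local data: at a split prime $\prm_i$ choose $I_{\prm_i} = M\order_{\prm_i}$ with $v_{\prm_i}(\det M)=e_i$, and at a ramified prime $\prm_i$ take $\Prm_i^{e_i}$, which has reduced norm $\prm_i^{e_i}$ because $\nrd(\Prm_i)=\prm_i$. These local ideals, together with $\order_\prm$ at all other primes, glue by the local-global correspondence for lattices to a global right $\order$-ideal of reduced norm $\idl$.

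Injectivity is the substantive step. Assuming $\nrd(I) = (\alpha)\nrd(J)$ with $\alpha\in\fld^\times$ positive at every ramified real place, I would first choose a finite set $S$ of primes containing the support of $\alpha$, so that $\alpha\in\Z_{\fld,S,\alg}^\times$, and use Theorem~\ref{thmeichlernorm} to obtain $x\in\order_S^\times\subset\alg^\times$ with $\nrd(x)=\alpha$; replacing $I$ by $x^{-1}I$ keeps the class and reduces to $\nrd(I)=\nrd(J)$. Locally one then has $I_\prm = \delta_\prm J_\prm$ for some $\delta_\prm\in\alg_\prm^\times$ with $\nrd(\delta_\prm)\in\Z_{\fld,\prm}^\times$, and using surjectivity of the local reduced norm on $\order_l(J)_\prm^\times$ we can left-multiply $\delta_\prm$ by a local unit to arrange $\nrd(\delta_\prm)=1$, with $\delta_\prm=1$ outside a finite set $T$ of primes. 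A multi-prime version of Theorem~\ref{thmstrongapprox}, proved by the same argument applied simultaneously at the primes in $T$, then produces a global $\delta\in\order^1$ with $\delta\in\delta_\prm\order_l(J)_\prm^1$ at every prime, so that $\delta J_\prm = I_\prm$ for all $\prm$ and hence $\delta J=I$ globally.

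The hard part will be this final adelic gluing step, which assembles compatible local identifications of $I$ and $J$ into a single global element of $\alg^\times$. It is precisely here that the Eichler condition is indispensable, through strong approximation for $\alg^1$; the well-definedness and surjectivity halves are essentially formal or purely local.
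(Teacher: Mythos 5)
The paper itself gives no proof of this statement: Theorem~\ref{thmeichlerclass} is recalled as classical background (Eichler's theorem, cited from Vign\'eras, Kirschmer--Voight and Voight), so there is no internal argument to compare yours against. Your outline is the standard proof and is essentially sound: well-definedness follows from positivity of the reduced norm on the completions at ramified real places, surjectivity from gluing prescribed local ideals (using $\nrd(\Prm)=\prm$ at ramified primes), and injectivity from Theorem~\ref{thmeichlernorm} (to reduce to $\nrd(I)=\nrd(J)$) followed by strong approximation for the norm-one group to globalize the local identifications.

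Two points in the final gluing step need repair, though neither changes the architecture. First, the global element you produce lies in $\alg^1$, not in $\order^1$: the conditions you impose are $\delta\in\delta_\prm\order_l(J)_\prm^1$, where $\delta_\prm$ is in general non-integral and $\order_l(J)\neq\order$, and an integral $\delta$ need not exist at all --- for instance if locally $I_\prm$ has elementary divisors $(1,\pi^2)$ and $J_\prm=\pi\order_\prm$, the norms agree but every $\delta$ with $\delta J_\prm=I_\prm$ has a pole at $\prm$. Second, and relatedly, a ``multi-prime version of Theorem~\ref{thmstrongapprox}'' is not quite the right tool: that statement only produces elements of $\order^1$ with prescribed (integral) reductions in $\SL_2(\Z_\fld/\prm^k)$, whereas you must hit the open cosets $\delta_\prm\order_l(J)_\prm^1\subset\SL_2(\fld_\prm)$; what is actually needed is the strong approximation theorem itself, i.e.\ density of $\alg^1(\fld)$ in the finite ad\`ele points of $\alg^1$, of which Theorem~\ref{thmstrongapprox} is only one consequence. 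Since the paper invokes strong approximation as the source of Theorem~\ref{thmstrongapprox}, appealing to it directly is legitimate, but it should be cited in that form rather than derived from the congruence statement. Two smaller remarks: at ramified primes, equal reduced norms already force $I_\prm=J_\prm$ (local one-sided ideals are powers of the maximal two-sided ideal), so your set $T$ consists of split primes only; and when normalizing $\nrd(\delta_\prm)=1$ you should right-multiply $\delta_\prm$ by a unit of $\order_l(J)_\prm^\times$ (equivalently left-multiply by one of $\order_l(I)_\prm^\times$), since left multiplication by a unit of $\order_l(J)_\prm$ need not preserve $\delta_\prm J_\prm=I_\prm$. With these adjustments your argument is complete and agrees with the proofs in the standard references.
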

In other words, two right~$\order$-ideals are equivalent if and only if the classes of their norm in~$\clalg$ are equal. Note that since~$\Cl(\order)$ is not a group, this map is only a bijection of \emph{sets}.

\subsection{The Bruhat-Tits tree}\label{secbtt}
The standard reference for this section is~\cite{serretrees}. Let~$\fld$ be a field with a discrete valuation~$v$. Let~$R$ be its valuation ring,~$\pi$ a uniformizer and~$\fldfin{}=R/\pi R$ the residue field. An \emph{$R$-lattice in~$\fld^2$} is an~$R$-submodule of rank~$2$ in~$\fld^2$. We define the \emph{Bruhat-Tits tree~$\tree$}, which we write~$\tree_\prm$ when~$\fld$ is the $\prm$-adic completion of a number field. The set of vertices of~$\tree$ is the set of homothety classes of $R$-lattices in~$\fld^2$. Let~$L,L'$ be two such $R$-lattices. There exists an ordered~$R$-basis~$(e_1,e_2)$ of~$L$ and integers~$a,b$ such that~$(\pi^a e_1,\pi^b e_2)$ is an $R$-basis of~$L'$. The integer~$|a-b|$ depends only on the homothety classes of~$L,L'$ and is called their \emph{distance}.
By definition, there is an edge in the tree~$\tree$ between every pair of vertices at distance~$1$. The graph~$\tree$ is an infinite tree. If~$P,Q$ are two vertices, the unique path of minimum length between~$P$ and~$Q$ is called the \emph{segment}~$PQ$ and the distance~$d(P,Q)$ equals the length of the segment~$PQ$. The set of vertices at distance~$1$ from a given vertex is in natural bijection with~$\Proj^1(\fldfin{})$.
The group~$\GL_2(\fld)$ acts on the tree and preserves the distance, and this action factors through~$\PGL_2(\fld)$ and is transitive on the set of vertices. The stabilizer of the vertex~$P_0$ corresponding to the $R$-lattice~$R^2$ is~$\fld^\times\GL_2(R)$ and the stabilizer of any vertex is a conjugate of this group. The group~$\SL_2(R)$ acts transitively on the set of vertices at a fixed distance from~$P_0$. For every~$g\in\mat_2(R)\setminus\pi\mat_2(R)$, the Smith normal form shows that~$d(g\act P_0,P_0)=v(\det(g))$. The tree is illustrated in Figure~\ref{figtree} where we label some vertices~$P$ with a matrix~$g$ such that~$P=g\act P_0$.

\begin{figure}[thb]
\begin{center}
\begin{tikzpicture}[btt,
  level/.style={
    level distance/.expanded=\ifnum#1>1 \tikzleveldistance/1.5\else\tikzleveldistance\fi,
    nodes/.expanded={\ifodd#1 fill=black\else fill=white\fi}
  }]
  \draw (-0.3,-0.2) node[draw=none,scale=0.9] {$\left(\begin{smallmatrix}1&0\\0&1\end{smallmatrix}\right)$};
  \draw (-0.3,-0.8) node[draw=none,scale=0.9] {$\left(\begin{smallmatrix}2&0\\0&1\end{smallmatrix}\right)$};
  \draw (-0.6,-1.22) node[draw=none,scale=0.9] {$\left(\begin{smallmatrix}2&0\\1&2\end{smallmatrix}\right)$};
  \draw (0.11,-1.53) node[draw=none,scale=0.9] {$\left(\begin{smallmatrix}4&0\\0&1\end{smallmatrix}\right)$};
  \draw (0.11,-1.93) node[draw=none,scale=0.9] {$\left(\begin{smallmatrix}8&0\\0&1\end{smallmatrix}\right)$};
  \draw (0.8,-1.22) node[draw=none,scale=0.9] {$\left(\begin{smallmatrix}4&0\\1&2\end{smallmatrix}\right)$};
  
  \draw (-0.5,0.6) node[draw=none,scale=0.9] {$\left(\begin{smallmatrix}1&0\\0&2\end{smallmatrix}\right)$};
  \draw (-0.68,1.1) node[draw=none,scale=0.9] {$\left(\begin{smallmatrix}1&0\\0&4\end{smallmatrix}\right)$};
  \draw (-1.28,0.1) node[draw=none,scale=0.9] {$\left(\begin{smallmatrix}1&0\\2&4\end{smallmatrix}\right)$};
  
  \draw (0.55,0.6) node[draw=none,scale=0.9] {$\left(\begin{smallmatrix}1&0\\1&2\end{smallmatrix}\right)$};
  \draw (1.28,0.1) node[draw=none,scale=0.9] {$\left(\begin{smallmatrix}1&0\\1&4\end{smallmatrix}\right)$};
  \draw (0.7,1.1) node[draw=none,scale=0.9] {$\left(\begin{smallmatrix}1&0\\3&4\end{smallmatrix}\right)$};
  \levelu
\end{tikzpicture}
\end{center}
\caption{The Bruhat-Tits tree for~$\fld=\Q_2$}
\label{figtree}
\end{figure}

\begin{theorem}\label{thmbtt}
  Let~$P,Q$ be two vertices of the tree~$\tree$ with~$d(P,Q)=1$. Then the action of the group~$G=\SL_2(\fld)$ on the vertices of~$\tree$ has exactly two orbits~$G\act P$ and~$G\act Q$.
\end{theorem}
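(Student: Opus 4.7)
The plan is to introduce the parity function $\ell(P) := d(P, P_0) \bmod 2$ and to show both that $\ell$ is $\SL_2(\fld)$-invariant and that each of its two fibres is a single orbit. Since a tree is bipartite, the hypothesis $d(P,Q) = 1$ forces $\ell(P) \neq \ell(Q)$, so this will immediately identify $G \act P$ and $G \act Q$ as the two orbits and show that they exhaust the vertex set.

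For invariance, I would first extend the relation $d(g \act P_0, P_0) = v(\det g)$ recalled in the excerpt (valid for $g \in \mat_2(R)\setminus\pi\mat_2(R)$) to all $h \in \GL_2(\fld)$ modulo $2$: writing $h = \pi^{-k} h'$ with $h' \in \mat_2(R)\setminus\pi\mat_2(R)$, the scalar $\pi^{-k}$ fixes every homothety class, so $d(h \act P_0, P_0) = v(\det h') \equiv v(\det h) \pmod{2}$. Since $\GL_2(\fld)$ acts transitively on vertices, every vertex has the form $P = h \act P_0$, and then $d(g \act P, P_0) \equiv v(\det(gh)) = v(\det h) \equiv d(P, P_0) \pmod{2}$ for any $g \in \SL_2(\fld)$. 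For transitivity on the even class, the diagonal elements $g_k = \mathrm{diag}(\pi^k, \pi^{-k}) \in \SL_2(\fld)$ produce points at every even distance from $P_0$, and the transitivity of $\SL_2(R)$ on each sphere around $P_0$ (noted in the excerpt) fills out the orbit. For the odd class, given a neighbour $P_1$ of $P_0$ and any $P'$ at odd distance, the penultimate vertex $P''$ on the geodesic from $P_0$ to $P'$ lies at even distance, hence equals $g_1 \act P_0$ for some $g_1 \in \SL_2(\fld)$ by the even case; then $g_1^{-1} \act P'$ is a neighbour of $P_0$, and $\SL_2(R)$-transitivity on such neighbours yields $s \in \SL_2(R)$ with $s \act P_1 = g_1^{-1} \act P'$, so $g_1 s \act P_1 = P'$.

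The main obstacle is transitivity on the odd class, since no obvious $\SL_2(\fld)$-element sends $P_0$ directly to an odd-distance vertex: determinant-one diagonal matrices always shift the distance by an even amount, so the odd case has to be bootstrapped from the even case together with the $\SL_2(R)$-transitivity on the unit sphere around $P_0$. Once both parity classes are identified as orbits, the bipartiteness of $\tree$ closes the argument.
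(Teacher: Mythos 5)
Your proof is correct: the parity invariant $d(\,\cdot\,,P_0)\bmod 2$ (preserved because $v(\det)$ of any $\GL_2(\fld)$-representative is well defined mod $2$ on homothety classes and vanishes mod $2$ on $\SL_2(\fld)$), combined with $\SL_2(R)$-transitivity on spheres and the bipartiteness of the tree, does establish exactly two orbits. The paper itself gives no proof of this theorem, citing Serre's \emph{Trees} instead, and your argument is essentially the standard one found there (the ``type'' of a vertex), so there is nothing to correct.
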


The connection between the Bruhat-Tits tree and ideals is the following: a right $\mat_2(R)$-ideal is always principal, generated by an element of~$\GL_2(\fld)$. Such an ideal is two-sided if and only if it is generated by an element of~$\fld^\times\GL_2(R)$. So there is a~$\GL_2(\fld)$-equivariant bijection between set of the vertices of the Bruhat-Tits tree and the quotient of the set of right~$\mat_2(R)$-ideals modulo the action of the group of two-sided $\mat_2(R)$-ideals.

\section{Algorithms}\label{secalgos}
We want to adapt the classical subexponential algorithms for computing the class group of a number field due to Hafner and McCurley~\cite{hmc} in the quadratic case and Buchmann~\cite{buchmann} in the general case to indefinite quaternion algebras by using a factor base: a fixed finite set of primes of~$\Z_\fld$. To simplify the notations, we set~$\Delta=\Delta_\alg$.

\begin{definition}\label{deffb}
The \emph{factor base} for~$\alg$ is a finite set~$\fb$ of primes of~$\Z_\fld$ that generates the group~$\clalg$.
\end{definition}

We say that a fractional ideal~$\idl$ of~$\fld$ is~\emph{$\fb$-smooth} or simply \emph{smooth} if it is a product of the primes in~$\fb$. Let~$I$ be a right~$\order$-ideal~$I$. When~$I$ is integral, we say that~$I$ is \emph{smooth} if its reduced norm is. When~$I$ is arbitrary, it is \emph{smooth} if it can be written~$I=J\idl$ with~$\idl$ a smooth fractional ideal of~$\fld$ and $J$ an integral smooth right $\order$-ideal. Equivalently, the ideal~$I$ is smooth if and only if~$I_\prm=\order_\prm$ for all~$\prm \notin \fb$. An element~$x\in A^\times$ is \emph{smooth} if the ideal~$x\order$ is smooth, or equivalently if~$x\in\order_S^\times$ with~$S=\fb$.

\medskip

We equip~$\mat_2(\R)$ and~$\mat_2(\C)$ with the usual positive definite quadratic form~$Q$ given by the sum of the squares of the absolute values of the coefficients, and we equip the Hamiltonian quaternion algebra~$\Hamil$ with the positive definite quadratic form~$Q=\nrd$. For each infinite place of~$\fld$ represented by a complex embedding~$\sigma$, we fix an isomorphism~$\sigma': \alg\otimes_{\fld}\fld_{\sigma}\cong M$ extending~$\sigma$, where~$M$ is one of~$\mat_2(\R)$, $\mat_2(\C)$ or~$\Hamil$. This defines a positive definite quadratic form~$T_2 : \alg\otimes_{\Q}\R \to \R$ by setting~$T_2(x) = \sum_{\sigma}[\fld_{\sigma}:\R]\cdot Q(\sigma'(x))$ for all~$x\in\alg\otimes_{\Q}\R$, giving covolume~$\Delta^{1/2}$ to the lattice~$\order$.
We represent a lattice in~$\alg$ by a $\Z_\fld$-pseudobasis (see~\cite{kv}). When~$L$ is a lattice in~$\alg$, we can enumerate its elements by increasing value of~$T_2$ with the Kannan--Fincke--Pohst algorithm~\cite{fp,kannan}. We represent this enumeration with a routine~\texttt{NextElement} that outputs a new element of~$L$ every time we call~\texttt{NextElement}($L$), ordering them by increasing value of~$T_2$.

\subsection{The reduction algorithms}\label{secred}
In this section, we describe the reduction structures and the corresponding reduction algorithms.
We start with the local reduction, which is an effective version of the fact that every integral right~$\order$-ideal of norm~$\prm^2$ is equivalent to the two-sided ideal~$\prm\order$ (Theorem~\ref{thmeichlerclass}). We perform this reduction by making algorithmic the reduction theory of~$\SL_2(\fld_\prm)$ on the Bruhat-Tits tree~$\tree_\prm$ (Section~\ref{secbtt}). The point is that this reduction needs only a small number of units: this leads to the definition of the $\prm$-reduction structure.

\begin{definition}
  Let~$\prm$ be a prime that splits in~$\alg$, let~$\order_0=\order$ and let~$P_0$ be the fixed point of~$\order_0^\times$ in the Bruhat-Tits tree~$\tree_\prm$. A \emph{$\prm$-reduction structure} is given by the following data:
  \begin{enumerate}
    \item the left order~$\order_1$ of an integral right $\order$-ideal of norm~$\prm$, and the fixed point~$P_1$ of~$\order_{1}^\times$ in~$\tree_\prm$;
    \item for each~$b\in\{0,1\}$ and for each~$P\in\tree_\prm$ at distance~$1$ from~$P_b$, an element~$g\in\order_b^\times$ such that~$g\act P = P_{1-b}$.
  \end{enumerate}
\end{definition}

Such a structure exists by strong approximation (Theorem~\ref{thmstrongapprox}). Note that if~$I$ is an integral right~$\order$-ideal of norm~$\prm$ such that~$\order_1=\order_l(I)$, we have~$I_\prm = x\order_\prm$ for some~$x\in\alg^\times$, and~$P_1 = x\act P_0$. We represent the points at distance~$1$ from~$P_b$ by elements of~$\Proj^1(\fldfin{\prm})$ and we compute the action on these points via explicit splitting maps~$\iota_{b}: \order_b/\prm\order_b \to \mat_2(\fldfin{\prm})$.

\medskip

This structure provides everything we need to perform reduction in the Bruhat-Tits tree. The following algorithm corresponds to the standard reduction procedure (Theorem~\ref{thmbtt}), which is illustrated in Figure~\ref{figred}. The idea is to use successive ``rotations'' (elements in~$\SL_2(\fld_\prm)$ having a fixed point in the tree) around the adjacent vertices~$P_0$ and $P_1$ to send an arbitrary vertex to one of the vertices~$P_b$: every rotation around a vertex decreases the distance to the other one.

\medskip

To realize this procedure, we need to perform the following subtask: given a right $\order$-ideal~$I$, find~$x\in I$ such that~$I_{\prm} = x\order_{\prm}$. A simple idea is to let~$e=v_\prm(\nrd(J))$ and to draw elements~$x\in J/\prm\order$ uniformly at random until~$v_\prm(\nrd(x))=e$. To obtain a deterministic algorithm, we can adapt Euclid's algorithm in the matrix ring~$\mat_2(\ring)$ with~$\ring = \Z_\fld/\prm^{e+1}$. This is done in~\cite{euclmat}, except that the base ring~$\ring$ is assumed to be a domain. We adapt the argument to our case. First note that we have a well-defined~$\prm$-adic valuation~$v=v_\prm$ in the ring~$\ring$. Let~$a,b\in\ring$. We have~$v(ab)\le v(a)+v(b)$ whenever~$ab\neq 0$, and~$a\mid b$ if and only if~$v(b)\ge v(a)$. If~$a\neq 0$, there is a Euclidean division taking the following simple form: if~$a\mid b$ then~$b = a\cdot (b/a) + 0$, and otherwise~$b = a\cdot 0 + b$. In every case we have written~$b = aq+r$ 
with~$r=0$ or~$v(r)<v(a)$.
Adapting this in the matrix ring leads to the following Euclidean division algorithm, where for convenience we write~$w = v\circ\det$. The idea is to work with~$A$ in Smith normal form, and if~$A$ is a diagonal matrix, dividing by~$A$ is almost the same as dividing by the diagonal coefficients. The difference is that we have to ensure that~$\det(R)\neq 0$ unless~$R=0$.

\begin{subalgorithm}[\texttt{DivideMatrix}]\label{divmat}
\begin{algorithmic}[1]
	\INPUT two matrices~$A,B\in\mat_2(\ring)$ with~$\det A \neq 0$, where~$\ring = \Z_F/\prm^i$.
	\OUTPUT two matrices~$Q,R\in\mat_2(\ring)$ such that~$B = AQ+R$, and~($R=0$ or~$w(R)<w(A)$).
	\STATE let~$A'=UAV$ be the Smith form of~$A$ with~$U,V\in\SL_2(\ring)$ and~$A = (\begin{smallmatrix}a & 0\\ 0 & b\end{smallmatrix})$
	\STATE $B'\sto UB$
	\STATE let~$B''=B'W$ be the Hermite form of~$B'$ with~$W\in\SL_2(\ring)$ and~$B'' = (\begin{smallmatrix}c & 0\\ e & f\end{smallmatrix})$
	\IF{$a\mid c$}
	  \IF{$b\mid f$}
	    \IF{$b\mid e$}
	      \STATE $Q \sto \bigl(\begin{smallmatrix}c/a & 0\\ e/b & f/b\end{smallmatrix}\bigr)$, $R \sto \bigl(\begin{smallmatrix}0 & 0\\ 0 & 0\end{smallmatrix}\bigr)$\label{stepcompletediv}
	    \ELSE
	      \STATE $Q \sto \bigl(\begin{smallmatrix}c/a & 1\\ 0 & f/b\end{smallmatrix}\bigr)$, $R \sto \bigl(\begin{smallmatrix}0 & -a\\ e & 0\end{smallmatrix}\bigr)$
	    \ENDIF
	  \ELSE
	    \STATE $Q \sto \bigl(\begin{smallmatrix}c/a-1 & 0\\ 0 & 0\end{smallmatrix}\bigr)$, $R \sto \bigl(\begin{smallmatrix}a & 0\\ e & f\end{smallmatrix}\bigr)$
	  \ENDIF
	\ELSE
	  \IF{$b\mid f$}
	    \STATE $Q \sto \bigl(\begin{smallmatrix}0 & 0\\ 0 & f/b-1\end{smallmatrix}\bigr)$, $R \sto \bigl(\begin{smallmatrix}c & 0\\ e & b\end{smallmatrix}\bigr)$
	  \ELSE
	    \STATE $Q \sto \bigl(\begin{smallmatrix}0 & 0\\ 0 & 0\end{smallmatrix}\bigr)$, $R \sto \bigl(\begin{smallmatrix}c & 0\\ e & b\end{smallmatrix}\bigr)$
	  \ENDIF
	\ENDIF
	\RETURN $VQW^{-1}$, $U^{-1}RW^{-1}$
\end{algorithmic}
\end{subalgorithm}

\begin{proposition}
  Subalgorithm~\ref{divmat} is correct.
\end{proposition}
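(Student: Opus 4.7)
The plan is to verify the two defining properties of the output: the identity $B = AQ + R$ and the remainder bound $R = 0$ or $w(R) < w(A)$. Before entering the case analysis I would record a few facts about the coefficient ring $\ring = \Z_\fld/\prm^i$: it is a local principal ideal ring in which the $\prm$-adic valuation $v$ descends, divisibility is characterised by $a\mid b\iff v(b)\ge v(a)$, and both Smith and Hermite normal forms are available with transformation matrices of unit determinant. This legitimates the decompositions $A' = UAV$ and $B'' = B'W = UBW$ with $U,V,W \in \SL_2(\ring)$ used at the start of the algorithm, and it also makes $w = v\circ\det$ well defined and additive on products.

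For the identity, I reduce to verifying $B'' = A'Q_0 + R_0$ branch by branch, where $Q_0, R_0$ denote the two matrices assigned inside the \textbf{if}-block before the final return line. Conjugating by $U^{-1}(-)W^{-1}$ and using $A = U^{-1}A'V^{-1}$ then yields
\[B = A\cdot (VQ_0W^{-1}) + U^{-1}R_0W^{-1},\]
which matches the returned pair $(Q,R)$. Each of the five branches is a short $2\times 2$ computation starting from $A' = \bigl(\begin{smallmatrix}a&0\\0&b\end{smallmatrix}\bigr)$ and $B'' = \bigl(\begin{smallmatrix}c&0\\e&f\end{smallmatrix}\bigr)$; the divisibility hypothesis of the branch is exactly what guarantees that the quotients $c/a$, $f/b$, $e/b$ appearing in $Q_0$ are well defined in $\ring$.

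For the remainder bound, since $U,V,W$ have unit determinant we get $w(A) = v(\det A') = v(a) + v(b)$ and $w(R) = v(\det R_0)$. The branch at line~\ref{stepcompletediv} produces $R_0 = 0$, which is the admissible null case. In each of the other four branches I compute $\det R_0$ explicitly and apply the contrapositive $a\nmid c \Rightarrow v(c) < v(a)$ (and its analogues for $b\nmid f$ and $b\nmid e$) to conclude $v(\det R_0) < v(a) + v(b)$.

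The main subtle point is not the per-branch bookkeeping, which is almost mechanical, but setting up the arithmetic of $\ring$ carefully so that the classical manipulations remain valid: one needs Smith and Hermite reductions realisable by $\SL_2(\ring)$ transforms (not merely $\GL_2(\ring)$) and one needs $w$ to detect non-invertibility correctly in the presence of the zero divisors of $\ring$. Both follow from $\ring$ being a local principal ideal ring with a single non-zero prime ideal, but stating them precisely at the outset is what reduces the proof to the five-branch verification.
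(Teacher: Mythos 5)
Your proof is correct and follows essentially the same route as the paper: a branch-by-branch verification that $B'' = A'Q_0 + R_0$ with $R_0 = 0$ or $\det R_0 \neq 0$ and $w(R_0) < w(A')$, followed by conjugation via $U^{-1}(\cdot)W^{-1}$ and the observation that the unit determinants of $U,V,W$ preserve $w$. (Carrying out the final branch literally will also expose a typo in the printed algorithm: when $a\nmid c$ and $b\nmid f$ the remainder should be $\bigl(\begin{smallmatrix}c&0\\e&f\end{smallmatrix}\bigr)$ rather than $\bigl(\begin{smallmatrix}c&0\\e&b\end{smallmatrix}\bigr)$, and its determinant $cf$ still satisfies the required bound.)
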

\begin{proof}
  By case-by-case analysis, we have~$B'' = A'Q+R$, and either~$R=0$ (Step~\ref{stepcompletediv}) or~$\det(R)\neq 0$ and~$w(R)<w(A')$. Let~$Q' =VQW^{-1}$ and~$R'=U^{-1}RW^{-1}$ be the matrices returned by the algorithm. We have~$B = U^{-1}B' = U^{-1}B''W^{-1} = U^{-1}(A'Q+R)W^{-1} = U^{-1}A'V^{-1}Q'+R' = AQ'+R'$. Since~$U,V$ and~$W$ have determinant~$1$, we have~$R=0$ if and only if~$R'=0$, and~$w(R')=w(R)<w(A')=w(A)$, proving the correctness of the algorithm.
\end{proof}

\begin{subalgorithm}[\texttt{GCDMatrix}]\label{gcdmat}
\begin{algorithmic}[1]
  \INPUT two matrices~$A,B\in\mat_2(\ring)$ with~$\det A \neq 0$, where~$\ring = \Z_F/\prm^i$.
  \OUTPUT a matrix~$D$ such that~$A\mat_2(\ring)+B\mat_2(\ring) = D\mat_2(\ring)$.
  \STATE $Q,R\sto $ \texttt{DivideMatrix}($A$, $B$)
  \IF{$R=0$}
    \RETURN $A$
  \ELSE
    \RETURN \texttt{GCDMatrix}($R$, $A$)\label{steprec}
  \ENDIF
\end{algorithmic}
\end{subalgorithm}

\begin{proposition}
  Subalgorithm~\ref{gcdmat} is correct.
\end{proposition}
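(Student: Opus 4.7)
The plan is to prove correctness of Subalgorithm~\ref{gcdmat} by strong induction on~$w(A) = v(\det A)$, splitting into termination and the ideal identity.

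First I would establish termination. Since $\det A\neq 0$ and $A\in\mat_2(\ring)$ with $\ring=\Z_F/\prm^i$, the quantity $w(A)=v_\prm(\det A)$ is a well-defined nonnegative integer bounded by $i-1$. Subalgorithm~\ref{divmat} guarantees that either $R=0$ (in which case the algorithm returns immediately) or $\det(R)\neq 0$ and $w(R)<w(A)$. In particular, when the recursive call \texttt{GCDMatrix}($R$,$A$) at Step~\ref{steprec} is executed, its first argument has nonzero determinant, so the hypotheses of the subalgorithm are satisfied for the recursive call, and $w$ of the first argument strictly decreases. Since $w$ is a nonnegative integer, the recursion terminates in at most $w(A)+1$ steps.

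Next I would prove the output identity. In the base case where $R=0$, the relation $B=AQ+R$ yields $B=AQ\in A\mat_2(\ring)$, so $A\mat_2(\ring)+B\mat_2(\ring)=A\mat_2(\ring)$ and returning $A$ is correct. In the recursive case, I would show the key equality
\[A\mat_2(\ring)+B\mat_2(\ring)=R\mat_2(\ring)+A\mat_2(\ring).\]
The inclusion $\supseteq$ uses $R=B-AQ\in A\mat_2(\ring)+B\mat_2(\ring)$, and the inclusion $\subseteq$ uses $B=AQ+R\in A\mat_2(\ring)+R\mat_2(\ring)$. By the induction hypothesis applied to the pair $(R,A)$ — whose first argument has nonzero determinant and strictly smaller $w$ — the recursive call returns $D$ with $R\mat_2(\ring)+A\mat_2(\ring)=D\mat_2(\ring)$, which is exactly what we need.

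The only mild obstacle is verifying that the hypothesis $\det(\text{first argument})\neq 0$ is preserved by the recursive call; this is where the stronger conclusion of Subalgorithm~\ref{divmat} (not merely $R=0$ or $w(R)<w(A)$, but $R=0$ or $\det R\neq 0$ and $w(R)<w(A)$, as established in its correctness proof) is essential. Everything else is a routine induction, and no property of the base ring $\ring$ beyond the existence of the Euclidean division provided by Subalgorithm~\ref{divmat} is required.
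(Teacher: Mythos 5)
Your proof is correct and follows essentially the same route as the paper: the paper's proof notes that $\det(R)\neq 0$ (from the stronger conclusion established in the correctness proof of \texttt{DivideMatrix}) makes the recursive call valid, and then appeals to the standard Euclidean-algorithm argument, which is exactly the induction on $w$ together with the ideal identity $A\mat_2(\ring)+B\mat_2(\ring)=R\mat_2(\ring)+A\mat_2(\ring)$ that you spell out. No issues.
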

\begin{proof}
  In Step~\ref{steprec} we have~$\det(R)\neq 0$ by the properties of Subalgorithm~\ref{divmat}, so the recursive call to~\texttt{GCDMatrix} is valid. The rest of the proof is the same as with the usual Euclidean algorithm.
\end{proof}

\begin{subalgorithm}[\texttt{LocalGenerator}]\label{locgene}
\begin{algorithmic}[1]
  \INPUT an integral right~$\order$-ideal~$I$ and a prime~$\prm$, for some maximal order~$\order$.
  \OUTPUT an element~$x\in I$ such that~$I_\prm = x\order_\prm$.
  \STATE $b_1,\dots,b_n \sto $ an $LLL$-reduced~$\Z$-basis of~$I$
  \STATE $e\sto v_\prm(\nrd(b_1))$
  \STATE $\ring\sto \Z_\fld/\prm^{e+1}$
  \STATE $B_1,\dots,B_n\sto $ images of~$b_1,\dots,b_n$ in~$\mat_2(\ring)$
  \STATE $D\sto B_1$\label{stepinit}
  \FOR{$i=1$ to~$n$}
    \STATE $D\sto $ \texttt{GCDMatrix}($D$, $B_i$)
  \ENDFOR
  \STATE let~$\mu_1,\dots,\mu_n\in\Z$ be such that~$\sum\mu_i B_i=D$\label{stepsolve}
  \RETURN $\sum \mu_i b_i$
\end{algorithmic}
\end{subalgorithm}

\begin{proposition}
  Subalgorithm~\ref{locgene} is correct.
\end{proposition}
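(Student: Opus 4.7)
The plan is to prove two things: that the returned element $x=\sum\mu_i b_i$ lies in $I$, and that $x\order_\prm=I_\prm$. The first is immediate, since $x$ is a $\Z$-linear combination of a $\Z$-basis of $I$.

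For the ideal equality I would first note that, for $\order/\prm^{e+1}\order$ to be identified with $\mat_2(\ring)$ as the algorithm implicitly does, the prime $\prm$ must split in $\alg$, so that $\order_\prm\cong\mat_2(\Z_{\fld,\prm})$. Set $v=v_\prm(\nrd(I))$. Since $b_1\in I$, we have $\nrd(b_1)\in\nrd(I)$, whence $e\ge v$. Under the identification $\order/\prm^{e+1}\order\cong\mat_2(\ring)$, the matrices $B_i$ $\Z$-span the image $\bar I$ of $I$, which is simultaneously a right $\mat_2(\ring)$-ideal (because $I$ is a right $\order$-ideal). By correctness of Subalgorithm~\ref{gcdmat}, $D$ generates this right ideal; in particular $D\in\bar I$, which guarantees that Step~\ref{stepsolve} has a solution and that the output $x$ reduces to $D$ modulo $\prm^{e+1}\order$. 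Lifting the identity $\bar I=D\mat_2(\ring)$ back to $\order_\prm$ yields
\[ x\order_\prm+\prm^{e+1}\order_\prm \;=\; I_\prm+\prm^{e+1}\order_\prm. \]

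The key structural input is the inclusion $\prm^{e+1}\order_\prm\subseteq\prm I_\prm$. I would verify this by a Smith decomposition of a local generator $g$ of $I_\prm$: if its elementary divisors are $\pi^a,\pi^b$ with $a\le b$, then $a+b=v\le e$ forces $b\le e$, from which $\prm^{e+1}\order_\prm\subseteq \pi\cdot g\order_\prm=\prm I_\prm$ follows by a direct check. Since then also $\prm^{e+1}\order_\prm\subseteq I_\prm$, the displayed equation rewrites as $I_\prm=x\order_\prm+\prm I_\prm$, and Nakayama's lemma applied to the finitely generated $\Z_{\fld,\prm}$-module $I_\prm/x\order_\prm$ concludes that $x\order_\prm=I_\prm$, as desired.

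The main obstacle I foresee is not the Smith-form inclusion or the Nakayama step, both of which are short, but rather cleanly justifying that the image $\bar I\subset\mat_2(\ring)$, spanned as a $\Z$-module by the $B_i$, actually coincides with the right $\mat_2(\ring)$-ideal they generate, and that Subalgorithm~\ref{gcdmat} returns a generator of this right ideal in the noncommutative setting $\ring=\Z_\fld/\prm^{e+1}$ (a ring with zero divisors and without unique factorization). Once this bookkeeping is in place, the lift back from $\mat_2(\ring)$ to $\order_\prm$ followed by Nakayama finishes the argument.
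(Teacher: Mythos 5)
Your argument is correct, and up to the last step it runs parallel to the paper's proof: bound $e\ge v_\prm(\nrd(I))$ via $\nrd(b_1)\in\nrd(I)$, reduce modulo $\prm^{e+1}$, use the correctness of \texttt{GCDMatrix} to identify $D\mat_2(\ring)$ with the image of $I$ (you in fact justify, more explicitly than the paper does, why the $\Z$-span of the $B_i$ coincides with the right $\mat_2(\ring)$-ideal they generate, which is exactly what makes Step~\ref{stepsolve} solvable — so the ``obstacle'' you worry about at the end is already dispatched in your own text), and then lift back to $\order_\prm$. Where you genuinely diverge is the finish. The paper argues on the output element itself: since $x\equiv D\pmod{\prm^{e+1}\order}$ and $\det D\neq 0$ in $\ring$, one has $v_\prm(\nrd(x))\le e$, hence $\prm^{e+1}x^{-1}=\prm^{e+1}\nrd(x)^{-1}\bar{x}\subseteq\order_\prm$, i.e.\ $\prm^{e+1}\order_\prm\subseteq x\order_\prm$, and then any $y\in I_\prm$ is $xa+b$ with $b\in\prm^{e+1}\order_\prm\subseteq x\order_\prm$; no Nakayama and no appeal to local principality of $I_\prm$ are needed. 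You instead bound the elementary divisors of a local generator of $I_\prm$ to get $\prm^{e+1}\order_\prm\subseteq\prm I_\prm$ and conclude by Nakayama; this is valid (local principality is available because $\prm$ splits, a hypothesis the algorithm indeed uses implicitly, as you rightly point out and the paper leaves tacit), but it is slightly heavier machinery for the same conclusion. One small point you should add in either version: the calls to \texttt{GCDMatrix} require the first argument to have nonzero determinant, which is precisely why $\ring=\Z_\fld/\prm^{e+1}$ with $e=v_\prm(\nrd(b_1))$ is chosen ($\det B_1\neq 0$ at Step~\ref{stepinit}, and Subalgorithm~\ref{gcdmat} returns a matrix of nonzero determinant, so all subsequent calls are legitimate); you invoke its correctness without checking this precondition.
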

\begin{proof}
  By definition of the norm of an ideal, we have~$v_\prm(\nrd(I))\le v_\prm(\nrd(b_1)) = e$. Because of the choice of~$\ring$, at Step~\ref{stepinit} we have~$\det(D)\neq 0$, so the calls to \texttt{GCDMatrix} are valid. By the properties of Subalgorithm~\ref{gcdmat}, at Step~\ref{stepsolve} we have~$D\mat_2(\ring) = B_1\mat_2(\ring) + \dots + B_1\mat_2(\ring)$ so the integers~$\mu_1,\dots,\mu_n$ exist. Now let~$x = \sum \mu_i b_i\in I$ be the output of the algorithm, so that~$v_\prm(\nrd(x))\le e$, hence~$\prm^{e+1}\order_\prm\subset x\order_\prm$. Let~$y\in I_\prm$. By reduction modulo~$\prm^{e+1}$ there exists~$a\in\order_\prm$ and~$b\in\prm^{e+1}\order$ such that~$y = xa+b\in x\order_\prm$, proving the result.
\end{proof}

Now we can present the local reduction algorithm.

\begin{subalgorithm}[\texttt{PReduce}]\label{preduce}
\begin{algorithmic}[1]
	\INPUT an integral right $\order$-ideal $I$, a prime $\prm$ and a $\prm$-reduction structure.
	\OUTPUT an integer~$r$, an element~$c\in \alg^\times$ and an integral~$\order$-ideal~$J$ such that~$cI = J\prm^r$ and~$v_\prm(\nrd(J))\in\{0,1\}$.
	\STATE $r\sto$ largest integer such that~$I\subset \order\prm^r$, $J\sto I\prm^{-r}$\label{stepts}
	\STATE $k\sto v_\prm(\nrd(J))$
	\STATE $c \sto 1$, $b\sto 0$
	\STATE $x \sto$ \texttt{LocalGenerator}($I$, $\prm$)
	\STATE $Q\sto x\act P_0$
	\REPEAT
	  \STATE $P\sto $ point at distance~$1$ from~$P_b$ in the segment~$P_bQ$
	  \STATE $(c,J,Q)\sto g\act (c,J,Q)$ where~$g\in\order_b^\times$ is such that~$g\act P=P_{1-b}$\label{stepmul}
	  \LINEIF{$b=1$}{$J\sto J\prm^{-1}$, $r\sto r+1$, $k\sto k-2$}\label{stepdivide}
	  \STATE $b\sto 1-b$
	\UNTIL{$k<2$}
	\RETURN $J, c, r$
\end{algorithmic}
\end{subalgorithm}


  In Step~\ref{stepts}, we have~$2r = v_\prm(\nrd(\tsidl))$ where~$\tsidl$ is the two-sided $\order$-ideal generated by~$I$. We can compute~$\tsidl$ as follows: if~$w_1,\dots,w_n$ is a~$\Z$-basis of~$\order$ and~$b_1,\dots,b_n$ is a~$\Z$-basis of~$I$, then~$\tsidl = \sum_{i,j}\Z w_ib_j$.
  \medskip
  

\begin{figure}[thb]
\begin{center}
\begin{tikzpicture}[btt,
  level/.style={
    level distance/.expanded=\ifnum#1>1 \tikzleveldistance/1.5\else\tikzleveldistance\fi,
    nodes/.expanded={\ifodd#1 fill=black\else fill=white\fi}
  }]
  \begin{scope}[xshift=-\scsh,yshift=\scsh]
  \draw (0,0) node[draw=none,above,minimum size=20pt]{$P_0$};
  \draw (0,-1) node[draw=none,below,minimum size=20pt]{$P_1$};
  \draw (180:2) node[draw=none]{$Q$};
  \draw (210:1.15) node[draw=none]{$g$};
  \draw[->, >=stealth] (165:1) arc (165:255:1);
  \draw (-67:2.15) node[draw=none]{$gQ$};
  \draw (-1.9,-1.7) node[draw=none]{$k=4$};
  \draw (-1.9,-1.95) node[draw=none]{$b=0$};
  \path[rotate=30]
  node[scale=\markscale]{}
  child[dotted] {
    node[solid,scale=\markscale]{}
    child[solid]{
      node{}
      \levelt
    }
    child[dotted]{
      node[solid]{}
      child[dotted]{
	node[solid]{}
	child[solid]{
	  node[solid]{}
	  \levelc
	}
	child[dotted]{
	  node[solid,scale=\markscale]{}
	  \levelc
	}
      }
      child[solid]{
	node[solid]{}
	\levelq
      }
    }
  }
  child {
    node{}
    \leveld
  }
  child[dashed] {
    node[solid]{}
    child[solid] {
      node[solid] {}
	\levelt
    }
    child[dashed] {
      node[solid]{}
      child[solid]{
	node[solid] {}
	\levelq
      }
      child[dashed]{
	node[solid] {}
	child[densely dashed]{
	  node[solid, scale=\markscale]{}
	  \levelc
	}
	child[solid]{
	  node{}
	  \levelc
	}
      }
    }
  };
  \end{scope}

  \begin{scope}[xshift=\scsh,yshift=\scsh]
  \draw (0,0) node[draw=none,above,minimum size=20pt]{$P_0$};
  \draw (0,-1) node[draw=none,below,minimum size=20pt]{$P_1$};
  \draw (-68:2.15) node[draw=none]{$Q$};
  \draw (39:1.6) node[draw=none]{$gQ$};
  \draw (-33:1) node[draw=none]{$g$};
  \draw[->, >=stealth] (0.6,-1.2) arc (-35:75:0.7);
  \draw (-1.9,-1.7) node[draw=none]{$k=4$};
  \draw (-1.9,-1.95) node[draw=none]{$b=1$};
  \path[rotate=30]
  node[scale=\markscale]{}
  child[dotted] {
    node[solid,scale=\markscale]{}
    child[solid]{
      node[solid]{}
      \levelt
    }
    child[dashed]{
      node[solid]{}
      child[dashed]{
	node[solid]{}
	child[solid]{
	  node[solid]{}
	  \levelc
	}
	child[densely dashed]{
	  node[solid, scale=\markscale]{}
	  \levelc
	}
      }
      child[solid]{
	node[solid]{}
	\levelq
      }
    }
  }
  child[dotted] {
    node[solid]{}
    child[solid]{
      node{}
      \levelt
    }
    child[dotted]{
      node[solid, scale=\markscale]{}
      \levelt
    }
  }
  child {
    node{}
    \leveld
  };
\end{scope}

\begin{scope}[xshift=-\scsh,yshift=-\scsh]
  \draw (0,0) node[draw=none,above,minimum size=20pt]{$P_0$};
  \draw (0,-1) node[draw=none,below,minimum size=20pt]{$P_1$};
  \draw (39:1.6) node[draw=none]{$Q$};
  \draw (-115:1.35) node[draw=none]{$gQ$};
  \draw[<-, >=stealth] (-75:1) arc (-75:15:1);
  \draw (-30:1.15) node[draw=none]{$g$};
  \draw (-1.9,-1.7) node[draw=none]{$k=2$};
  \draw (-1.9,-1.95) node[draw=none]{$b=0$};
  \path[rotate=30]
  node[scale=\markscale]{}
  child[dotted] {
    node[solid,scale=\markscale]{}
    child[dotted]{
      node[solid, scale=\markscale]{}
      \levelt
    }
    child[solid]{
      node{}
      \levelt
    }
  }
  child[dashed]{
    node[solid]{}
    child[solid]{
      node[solid]{}
      \levelt
    }
    child[dashed]{
      node[solid, scale=\markscale]{}
      \levelt
    }
  }
  child {
    node{}
    \leveld
  };
\end{scope}

\begin{scope}[xshift=\scsh,yshift=-\scsh]
  \draw (0,0) node[draw=none,above,minimum size=20pt]{$P_0$};
  \draw (0,-1) node[draw=none,below,minimum size=20pt]{$P_1$};
  \draw (-112:1.85) node[draw=none]{$Q$};
  \draw (215:1) node[draw=none]{$g$};
  \draw[->, >=stealth] (-0.6,-1.2) arc (215:105:0.7);
  \draw (-1.9,-1.7) node[draw=none]{$k=2$};
  \draw (-1.9,-1.95) node[draw=none]{$b=1$};
  \path[rotate=30]
  node[scale=\markscale]{}
  child[dotted] {
    node[solid,scale=\markscale]{}
    child[dashed]{
      node[solid, scale=\markscale]{}
      \levelt
    }
    child[solid]{
      node{}
      \levelt
    }
  }
  child {
    node{}
    \leveld
  }
  child {
    node{}
    \leveld
  };
\end{scope}
\end{tikzpicture}
\end{center}
\caption{\texttt{PReduce} (Subalgorithm~\ref{preduce})}
\label{figred}
\end{figure}

\begin{proposition}\label{proppreduce}
  Subalgorithm~\ref{preduce} is correct.
\end{proposition}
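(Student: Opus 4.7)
The plan is to prove correctness by identifying the loop invariants and verifying they are preserved. At the start of each iteration of the \texttt{repeat} block I would maintain: (i) $cI = J\prm^r$ with $J$ an integral right $\order$-ideal; (ii) $Q$ represents the homothety class of $J_\prm$ in~$\tree_\prm$; (iii) $k = v_\prm(\nrd(J))$; and (iv) $d(Q,P_b)=k$ if $b=0$, while $d(Q,P_b)=k-1$ if $b=1$. The initialization establishes these: the maximality of $r$ in Step~\ref{stepts} forces $J=I\prm^{-r}$ to be integral but not contained in $\prm\order$, which combined with $I_\prm=x\order_\prm$ (a property of \texttt{LocalGenerator}) gives $d(x\act P_0,P_0) = v_\prm(\nrd(I))-2r = k$.

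For the $b=0$ branch, the rotating element $g$ lies in $\order^\times$ and is thus a global unit of $\order$, so multiplication by $g$ preserves integrality and the reduced norm of $J$ everywhere. On the tree $g$ fixes $P_0$ and sends the neighbor $P$ on the segment $P_0Q$ to $P_1$, so $d(g\act Q,P_0)=k$ and $d(g\act Q,P_1)=k-1$. Toggling $b$ to $1$ then restores the invariants without changing $k$.

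The $b=1$ branch is the delicate step and I expect it to be the main obstacle. Here $g\in\order_1^\times$ is a unit only in $\order_1$, and we must justify that $gJ\prm^{-1}$ is still integral. Away from $\prm$ the orders $\order$ and $\order_1$ coincide locally (since the ideal between them has norm $\prm$), so $g$ is a unit in $\order_{\prm'}$ for every $\prm'\neq\prm$, and integrality there is immediate. At $\prm$ the crucial statement is $gJ_\prm\subset\prm\order_\prm$, which I would derive from the vertex--ideal correspondence of Section~\ref{secbtt}: on the one hand $v_\prm(\nrd(gJ_\prm))=k$ because $\nrd(g)$ is a $\prm$-adic unit; on the other hand the image vertex $g\act Q$ lies at distance $k-2$ from $P_0$, and the minimal integral representative of the corresponding homothety class has reduced norm $\prm^{k-2}$, so $gJ_\prm$ must equal $\prm$ times this representative. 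The update $J\sto gJ\prm^{-1}$, $r\sto r+1$, $k\sto k-2$ then restores all four invariants simultaneously.

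Termination is immediate because $k$ strictly decreases by $2$ every two iterations while staying non-negative, so the loop exits with $k\in\{0,1\}$. Combined with (i) this yields the required output $cI=J\prm^r$ with $J$ integral and $v_\prm(\nrd(J))\in\{0,1\}$.
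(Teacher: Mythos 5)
Your proof is correct and follows essentially the same route as the paper: you track the tree distance $d(Q,P_b)$ and the valuation $k=v_\prm(\nrd(J))$ as loop invariants, and justify the division by $\prm$ in the $b=1$ step by the fact that the distance to $P_0$ drops by $2$ while the norm valuation is preserved, forcing $gJ_\prm\subset\prm\order_\prm$. Your treatment is in fact slightly more explicit than the paper's at the delicate point, since you spell out that $\order$ and $\order_1$ agree locally away from $\prm$, which the paper leaves implicit.
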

\begin{proof}
  Since~$k$ decreases by~$2$ every two iterations and is positive by the loop condition, the algorithm terminates. We now prove that the output is correct.
  \par First, the distance~$d(P_b,Q)$ decreases by~$1$ during each execution of the loop: before Step~\ref{stepmul}, we have~$d(P_{1-b},gQ)=d(gP,gQ)=d(P,Q)=d(P_b,Q)-1$ since~$P$ is at distance~$1$ from~$P_b$ on the segment~$P_bQ$. We claim that before or after a complete execution of the loop, we have~$d(P_0,Q)=k$ (see Figure~\ref{figred}). The claim is true before the first iteration: we have~$x\in\order\setminus\prm\order$ so~$d(P_0,Q)=d(P_0,xP_0)=v_\prm(\nrd(x))=v_\prm(\nrd(J))=k$. We only need to prove that the equality~$d(P_0,Q)=k$ is preserved when~$b=1$. In that case, $P_1$ is in the segment~$P_0Q$ because of the previous iteration, so that~$d(P_0,Q)=1+d(P_1,Q)=k$.
  
  \par We now prove that before or after a complete execution of the loop, the $\order$-ideal~$J$ is integral and~$v_\prm(\nrd(J))=k$. This property clearly holds before the first iteration. Before Step~\ref{stepdivide}, after two iterations~$b=0$ and~$b=1$, $J$ and~$Q$ have been multiplied by an element~$h$ such that~$v_\prm(\nrd(h))=0$ and~$d(P_0,hQ)=d(P_0,Q)-2$, so~$hJ$ is divisible by~$\prm$. Step~\ref{stepdivide} hence preserves integrality and updates~$k$ according to the valuation of~$\nrd(J)$.
  \par We now prove the proposition. The element~$c$ is a product of elements of~$\order_0^\times$ and~$\order_1^\times$ so~$c$ is a~$\prm$-unit with~$\nrd(c)\in\Z_\fld^\times$. We have just proved that~$J$ is integral, and by Step~\ref{stepdivide} the value of~$r$ is such that~$cI = J\prm^r$. Because of the loop condition, after the algorithm terminates we have~$v_\prm(\nrd(J))=k\in\{0,1\}$.
\end{proof}

We now explain how to perform global reduction. We use linear algebra to control the valuations of a smooth ideal and then perform local reduction at every prime to get an ``almost two-sided ideal''. The first step is similar to its commutative analogue: we need sufficiently many ``relations'' (smooth elements in~$\alg^\times$) so that the quotient of the factor base by the norms of the relations is the ray class group~$\clalg$. This leads to the definition of a G-reduction structure.

\begin{definition}\label{defgred}
  A \emph{G-reduction structure} is given by the following data:
  \begin{enumerate}
    \item a~$\prm$-reduction structure for each~$\prm\in\fb$ that splits in~$\alg$;
    \item\label{defgred2} a finite set of elements~$X\subset \order\cap \alg^\times$ and a map~$\phi:\clalg \rightarrow \gp{\fb}$ that is a lift of an isomorphism~$\clalg \xrightarrow{\sim} \gp{\fb}/\gp{\nrd(X)}$ and such that~$\phi(1)=\Z_\fld$.
  \end{enumerate}
\end{definition}

The following algorithm performs global reduction. In order to avoid explosion of the size of the ideal in the local reduction, we extract the two-sided part, allowing us to reduce all exponents modulo~$2$. The remaining part stays small and gets $\prm$-reduced, while the two-sided part is only multiplied by powers of primes.

\begin{subalgorithm}[\texttt{GReduce}]\label{greduce}
\begin{algorithmic}[1]
	\INPUT a smooth integral right $\order$-ideal~$I$ and a G-reduction structure.
	\OUTPUT an integral ideal $J$, an element~$c\in \alg^\times$ and a two-sided ideal $\tsidl$ such that~$cI=J\tsidl$ and~$I$ is principal if and only if~$J=\order$ and~$\tsidl=\order$.
	\STATE $\idl\sto\nrd(I)$
	\STATE $\idlb\sto\phi(\idl)$ where~$\idl$ is seen as an element of~$\clalg$
	\STATE let~$e\in\Z^X$ be such that~$\nrd(y)\idl = \idlb$ where~$y = \prod_{x\in X}x^{e_x}$.
	\STATE $c\sto \prod_{x\in X}x^{e_x \mathrm{mod}\, 2}$, $f\sto \prod_{x\in X}\nrd(x)^{\floor{e_x/2}}$ \COMMENT{Extract two-sided part}
	\STATE $J\sto cI$\label{stepgmul}
	\STATE $\tsidl\sto$ two-sided ideal generated by~$J$
	\STATE $J\sto J\tsidl^{-1}$ \COMMENT{Extract two-sided part}\label{stepgts}
	\STATE $\tsidl\sto f\tsidl$
	\FOR{$\prm\in\fb$ dividing~$\nrd(J)$ and splitting in~$\alg$}\label{stepgloop}
	  \STATE $J,c',r \sto$ \texttt{PReduce}($J, \prm$)
	  \STATE $c\sto c'c$, $\tsidl\sto\prm^r\tsidl$
	\ENDFOR
	\RETURN $J, cf, \tsidl$
\end{algorithmic}
\end{subalgorithm}

\begin{proposition}\label{propgreduce}
  Subalgorithm~\ref{greduce} is correct.
\end{proposition}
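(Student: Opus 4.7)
My plan is to track three invariants through the algorithm and then derive the principality characterization from Theorem~\ref{thmeichlerclass}. The invariants are (i) $(cf)I = J\tsidl$, (ii) $J$ is an integral right $\order$-ideal, and (iii) $\tsidl$ is an integral two-sided $\order$-ideal. First I would verify that these hold after Step~8: at Step~\ref{stepgmul}, $cI$ is integral since $c\in\order$ (as a product of elements of $X\subset\order$); at Step~\ref{stepgts}, writing $\tsidl_0$ for the ideal produced at Step~6, the inclusion $J\subset\tsidl_0 = \order J\order$ gives $J\tsidl_0^{-1}\subset\tsidl_0\tsidl_0^{-1}=\order$; and because the fractional ideal $f$ is central, Step~8 yields $(cf)I = J\tsidl$. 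In the loop at Step~\ref{stepgloop}, Proposition~\ref{proppreduce} provides $c'J_{\mathrm{old}} = J_{\mathrm{new}}\prm^r$ with $J_{\mathrm{new}}$ integral and $\nrd(c')\in\Z_\fld^\times$, and since $\prm^r$ is central, the three invariants continue to hold.

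The heart of the argument is a parallel norm identity that I would maintain alongside: $\nrd(J)\nrd(\tsidl) = \idlb$ from Step~8 onwards. The starting point is the decomposition $\nrd(y) = \nrd(c)\cdot f^2$, which follows from $e_x = (e_x\bmod 2) + 2\lfloor e_x/2\rfloor$; combined with Step~3 this gives $\nrd(cI) = \nrd(c)\idl = \idlb/f^2$ after Step~\ref{stepgmul}. Step~\ref{stepgts} then divides $\nrd(J)$ by $\nrd(\tsidl_0)$ while Step~8 multiplies $\nrd(\tsidl)$ by $f^2$ (using $\nrd(f\order)=f^2$), so the product becomes $\idlb$. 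In each loop iteration $\nrd(J)$ is divided by $\prm^{2r}$ and $\nrd(\tsidl)$ is multiplied by $\nrd(\prm^r\order)=\prm^{2r}$, using that $\prm$ splits in $\alg$, so the product is preserved.

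For the final equivalence, if $J=\order$ and $\tsidl=\order$ then $(cf)I=\order$, so $I$ is principal. Conversely, if $I$ is principal then the class of $\idl = \nrd(I)$ in $\clalg$ is trivial, and the normalization $\phi(1)=\Z_\fld$ gives $\idlb=\Z_\fld$, whence $\nrd(J)\nrd(\tsidl)=\Z_\fld$. Since both $J$ and $\tsidl$ are integral, each reduced norm is individually trivial, forcing $J=\order$ and $\tsidl=\order$ (using that every nontrivial integral two-sided $\order$-ideal is a nonempty product of primes $\Prm$ with nontrivial reduced norm). The main technical care will be in the norm bookkeeping across the two-sided extraction of Steps~\ref{stepgts} and~8, where one must confirm that the $f^2$ absorbed into $\tsidl$ exactly compensates the squared factor appearing in $\nrd(y) = \nrd(c)\cdot f^2$; everything else is then formal multiplicativity together with the \texttt{PReduce} specification.
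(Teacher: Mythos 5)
Your invariant (iii) is false, and your concluding step rests on it. The ideal $\tsidl$ produced at Step~8 is $f\tsidl_0$ with $f=\prod_{x\in X}\nrd(x)^{\floor{e_x/2}}$, and the exponents $e_x$ (hence the $\floor{e_x/2}$) may be negative, so $f$ is in general a non-integral element of $\fld^\times$ and $\tsidl$ a genuinely fractional two-sided ideal. This is not a corner case: in the paper's Example~1 one has $f=1/7$ and $\tsidl=7^{-1}\order$ going into the local reduction, and $\tsidl$ only becomes $\order$ at the very end because the loop contributes $\prm^{r}$ — i.e.\ integrality of the final $\tsidl$ is a \emph{consequence} of correctness, not something you may assume while proving it. Consequently the step ``$\nrd(J)\nrd(\tsidl)=\Z_\fld$ with both factors integral, hence both trivial'' does not go through, and nothing else in your argument pins down $\nrd(J)$: the \texttt{PReduce} specification only gives $v_\prm(\nrd(J))\in\{0,1\}$ at the split primes treated in the loop, and says nothing at all about ramified primes dividing $\nrd(J)$.

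What is missing is the valuation argument that the paper uses in the principal case. There, $\idlb=\Z_\fld$ forces $\nrd(J)=f^{-2}\Z_\fld$ after Step~\ref{stepgmul}, so every valuation of $\nrd(J)$ is even; extracting the two-sided part at Step~\ref{stepgts} preserves this parity at split primes (two-sided ideals have norms with even valuation there) and, since the new $J$ has no two-sided divisor, no ramified prime can divide $\nrd(J)$. Each call to \texttt{PReduce} changes $v_\prm(\nrd(J))$ by $2r$ (as $\nrd(c')\in\Z_\fld^\times$), so parity is preserved and the output valuation, being in $\{0,1\}$ and even, is $0$. This yields $J=\order$, and only then does your (correct) norm identity $\nrd(J)\nrd(\tsidl)=\idlb=\Z_\fld$ give $\nrd(\tsidl)=\Z_\fld$, whence $\tsidl=\order$ because a two-sided ideal of the maximal order $\order$ is determined by its reduced norm. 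Your bookkeeping of the relation $(cf)I=J\tsidl$, the integrality of $J$, and the norm identity are all fine; the parity-at-split-primes and no-ramified-divisor steps are the essential content you need to add, and the integrality claim for $\tsidl$ should be dropped.
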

\begin{proof}
  Since Step~\ref{stepgts} and \texttt{PReduce} preserve integrality (Proposition~\ref{proppreduce}), the output~$J$ is integral. The relation~$cI=J\tsidl$ is clear by tracking the multiplications. If the output is such that~$J=\order$ and~$\tsidl=\order$, then~$I=c^{-1}\order$ is principal. Conversely, if~$I$ is principal, then~$\idl=\nrd(I)$ is trivial in the class group~$\clalg$ so~$\idlb=\phi(\mathrm{cl}(\idl))=\Z_F$. After Step~\ref{stepgmul}, we have~$\nrd(J)=f^{-2}\Z_F$. After Step~\ref{stepgts}, we have multiplied~$J$ by a two-sided ideal, so~$v_\prm(\nrd(J))$ is even for all primes~$\prm$ splitting in~$\alg$. Since~$J$ is not divisible by a two-sided ideal, $\nrd(J)$ is not divisible by primes that ramify in~$\alg$. We obtain~$J=\order$ at the end of the loop by the properties of~\texttt{PReduce} so~$\nrd(\tsidl)=\Z_F$. Since~$\tsidl$ is two-sided, it is entirely determined by its norm so~$\tsidl=\order$.
\end{proof}

Finally, we reduce the general case to the smooth case by the noncommutative analogue of standard randomizing techniques. We generate a random smooth $\order$-ideal by the following procedure, to which we refer as \texttt{RandomLeftIdeal}($\order$). For each~$\prm\in\fb$, pick a nonnegative integer~$k$. Let~$\iota:\order\to\mat_2(\Z_F/\prm^k)$ be a splitting map. Let~$M\in\mat_2(\Z_F/\prm^k)$ be a random upper-triangular matrix with zero determinant and compute~$R_\prm = \order\iota^{-1}(M)+\prm^k\order$. Finally, return~$\bigcap_\prm R_\prm$. Choose the exponents~$k$ such that~$\na(R)\approx\Delta$.
\par It not clear at the moment what the best distribution for the exponents is.
A simpler idea would be to use random products~$\prod_\prm \prm^{k_\prm}$. In our experience, this leads to poorly randomized ideals. This is clear in the case~$\fld=\Q$: the randomized ideals are simply integer multiples of~$\order$.

\begin{lmsalgorithm}[\texttt{IsPrincipal}] \label{isprincipal}
\begin{algorithmic}[1]
	\INPUT an integral right $\order$-ideal~$I$ and a G-reduction structure.
	\OUTPUT an integral ideal $J$, an element~$c\in \alg^\times$ and a two-sided ideal $\tsidl$ such that~$cfI=J\tsidl$ and~$I$ is principal if and only if~$J=\order$ and~$\tsidl=\order$.
	  \STATE $R\sto$ \texttt{RandomLeftIdeal}($\order$)
	  \STATE $x\sto$ \texttt{NextElement}($I^{-1}\cap R$)
	\LINEIF{$xI$ is not smooth}{ \algorithmicreturn \textbf{ FAIL }}\label{stepsmooth}
	\STATE $J, c, \tsidl \sto$ \texttt{GReduce}($xI$)
	\RETURN $J, cx, \tsidl$
\end{algorithmic}
\end{lmsalgorithm}

By Proposition~\ref{propgreduce}, if Algorithm~\ref{isprincipal} does not return FAIL, its output is correct. In practice, we repeat Algorithm~\ref{isprincipal} until it returns the result.

\subsection{Building the reduction structures}\label{secstruct}
Now we explain how to build the previous reduction structures. The local reduction structure needs units in~$\order$. In general it is difficult to compute the whole unit group~$\order^\times$: for instance in the Fuchsian case, the minimal number of generators is at least~$\Delta^{3/8+o(1)}$ (this follows from the theory of signatures of Fuchsian groups~\cite[Section 4.3]{katok} and a volume formula~\cite[Theorem 11.1.1]{macreid}), which makes it hopeless to find a subexponential method. However, we can find some units in~$\order$ by considering commutative suborders and computing generators of their unit group with Buchmann's algorithm. Heuristically, these units are sufficiently random for our purpose. This strategy is implemented by the following algorithms.

\begin{subalgorithm}[\texttt{P1Search}] \label{p1search}
\begin{algorithmic}[1]
	\INPUT a maximal order~$\orderb$ and a prime~$\prm$.
	\OUTPUT a set of elements~$X\subset\orderb^\times$ acting transitively on~$\Proj^1(\fldfin{\prm})$.
	\STATE $X\sto\emptyset$
	\REPEAT
	  \STATE $x\sto$ \texttt{NextElement}($\orderb$)\label{stepnexteltp1s}
	  \STATE $\fldext\sto \fld(x)$
	  \IF{$\fldext/\fld$ has positive relative unit rank}\label{stepposrk}
	    \STATE $\ring\sto\Z_\fldext\cap\orderb$
	    \STATE\label{stepunits} $X\sto X\,\cup$ a set of generators of $\ring^\times$
	  \ENDIF
	\UNTIL{$X$ acts transitively on~$\Proj^1(\fldfin{\prm})$}
	\RETURN $X$
\end{algorithmic}
\end{subalgorithm}

In Step~\ref{stepunits}, we can compute the unit group~$\ring^\times$ with the algorithms of Kl\"uners and Pauli~\cite{rings}. Note that we actually do not need the full group~$\ring^\times$: a subgroup of finite index is sufficient.


\begin{proposition}\label{propp1search}
  Subalgorithm~\ref{p1search} is correct.
\end{proposition}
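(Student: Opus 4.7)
The plan is to verify two things: (i) whenever the loop exits, the returned set~$X$ satisfies the specification, and (ii) the loop does terminate. Part~(i) is essentially immediate. The \textbf{until} clause ensures that at the moment of return~$X$ acts transitively on~$\Proj^1(\fldfin{\prm})$, which is half of what is claimed. The containment~$X\subset\orderb^\times$ is also clear: every element added at Step~\ref{stepunits} is a unit of the commutative subring~$\ring=\Z_\fldext\cap\orderb$ of~$\orderb$, so both the element and its inverse lie in~$\orderb$, i.e.\ in~$\orderb^\times$.

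For~(ii), the key ingredient is strong approximation. The action of~$\orderb^\times$ on~$\Proj^1(\fldfin{\prm})$ factors through~$\GL_2(\fldfin{\prm})$ via a splitting map modulo~$\prm$, assuming~$\prm$ splits in~$\alg$ (which is the setting in which \texttt{P1Search} is called when assembling a~$\prm$-reduction structure). By Theorem~\ref{thmstrongapprox}, $\orderb^1$ already surjects onto~$\SL_2(\Z_\fld/\prm)$, and in particular~$\orderb^\times$ acts transitively on the finite set~$\Proj^1(\fldfin{\prm})$. Since this set is finite, some finite subset of~$\orderb^\times$ already witnesses transitivity. The \texttt{NextElement} routine at Step~\ref{stepnexteltp1s} sweeps through all of~$\orderb$ by increasing~$T_2$, so every non-central unit~$u\in\orderb^\times$ with~$\fld(u)/\fld$ of positive relative unit rank is eventually reached as some~$x$, and at that iteration the full group~$(\Z_{\fld(u)}\cap\orderb)^\times$ (which contains~$u$) is added to~$X$.

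The main obstacle is to argue that quadratic-suborder units of positive relative rank, once sufficiently many are collected, really are enough for~$X$ to become transitive on~$\Proj^1(\fldfin{\prm})$; units coming from CM suborders or from~$\Z_\fld^\times$ are deliberately skipped by the test at Step~\ref{stepposrk}, so one needs to rule out the possibility that transitivity requires such skipped units. I would close this gap by invoking the structural fact that, for indefinite quaternion orders, the subgroup of~$\orderb^\times$ generated by units of quadratic suborders of positive relative unit rank has finite index in~$\orderb^\times$; combined with the strong-approximation surjection and a short case analysis of subgroups of~$\PSL_2(\fldfin{\prm})$, this yields that the image of the accumulated~$X$ in~$\PGL_2(\fldfin{\prm})$ eventually acts transitively on~$\Proj^1(\fldfin{\prm})$. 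If the structural input is unavailable in the generality needed, termination must be treated as a heuristic, consistent with the paper's overall complexity framework.
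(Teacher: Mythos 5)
Your part (i) and your appeal to strong approximation coincide with the paper's proof, but you diverge exactly where the work is: closing termination. The paper's argument is short and unconditional: by Theorem~\ref{thmstrongapprox} the group~$\orderb^\times$ acts transitively on~$\Proj^1(\fldfin{\prm})$; it is finitely generated; the enumeration at Step~\ref{stepnexteltp1s} eventually sweeps through a generating set, after which the loop condition is met. No finite-index theorem and no heuristic appears there. Your proposal instead rests on a ``structural fact'' (that the units of quadratic suborders of positive relative unit rank generate a finite-index subgroup of~$\orderb^\times$) which you neither prove nor reference, and, failing that, on treating termination ``as a heuristic''. That is a genuine gap: the proposition is stated unconditionally, and in the paper heuristics enter only in the running-time analysis (Heuristics~\ref{heurunits}, Proposition~\ref{propcomplp1search}), never in the correctness claims. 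Note also that even granting your structural fact, finite index alone does not give a transitive image modulo a fixed~$\prm$ (a finite-index subgroup can have small image in~$\PGL_2(\fldfin{\prm})$); you would additionally need the normality of your subgroup (its defining property is conjugation-invariant) and an argument excluding a near-trivial image before simplicity of~$\PSL_2(\fldfin{\prm})$ can do anything for you, so even the sketch as given has loose ends.

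Your underlying worry about the filter at Step~\ref{stepposrk} is also largely defused by an observation you missed: if~$u\in\orderb^\times$ has infinite order modulo~$\Z_\fld^\times$, then~$u$ and~$u^{-1}$ both lie in~$\ring=\Z_{\fld(u)}\cap\orderb$, so~$u\in\ring^\times$ has infinite order modulo~$\Z_\fld^\times$, which forces~$\fld(u)/\fld$ to have positive relative unit rank; hence~$u$ passes the test and enters~$\gp{X}$ via Step~\ref{stepunits} as soon as it is enumerated. The only elements the test discards are therefore units of finite order modulo the center (and elements generating CM-type suborders), and this is what the paper's one-line ``we will have enumerated a set of generators'' implicitly relies on. If you want to be more scrupulous than the paper at this point, the statement to establish is that the subgroup of~$\orderb^\times$ generated by its elements of infinite order modulo~$\Z_\fld^\times$ still acts transitively on~$\Proj^1(\fldfin{\prm})$ --- not the finite-index assertion you appeal to, and certainly not a retreat to heuristics for a result the paper proves outright.
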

\begin{proof}
  By strong approximation (Theorem~\ref{thmstrongapprox}), the group~$\orderb^\times$ acts transitively on~$\Proj^1(\fldfin{\prm})$. This group is finitely generated, so after finitely many iterations we will have enumerated a set of generators and the algorithm will terminate. By the loop condition, the output is correct.
\end{proof}

\begin{subalgorithm}[\texttt{PBuild}] \label{pbuild}
\begin{algorithmic}[1]
	\INPUT a maximal order~$\order$ and a prime~$\prm$.
	\OUTPUT a $\prm$-reduction structure.
	\STATE $I\sto$ an integral right $\order$-ideal of norm~$\prm$
	\STATE $\order_1\sto\order_l(I)$
	\STATE $X\sto$ \texttt{P1Search}($\order, \prm$)
	\STATE from~$X$, for each~$P$ at distance~$1$ from~$P_0$ compute an element~$g\in\order^\times$ such that~$g\act P=P_1$%
	\STATE $X\sto$ \texttt{P1Search}($\order_1, \prm$)\label{stepo1p1search}
	\STATE from~$X$, for each~$P$ at distance~$1$ from~$P_1$ compute an element~$g\in\order_1^\times$ such that~$g\act P=P_0$%
	\RETURN the $\prm$-reduction structure
\end{algorithmic}
\end{subalgorithm}

\begin{remark}
  Let~$g,g'\in\order^\times$ be such that~$g\act P_1 = g'\act P_1$ and let~$h=g^{-1}g'$. Then~$h\act P_1=P_1$ so~$h\in\order^\times\cap\order_1^\times$. This allows us to construct many elements in~$\order_1^\times$ before Step~\ref{stepo1p1search}. If we have sufficiently many such units, which often happens in practice, they will act transitively on the points~$P\neq P_0$ at distance~$1$ from~$P_1$. In this case, in Step~\ref{stepo1p1search} we will only need to find one element~$g\in\order_1^\times$ such that~$g\act P_0\neq P_0$.
\end{remark}

We build the global reduction structure in a way similar to the commutative case: we look for small relations in smooth ideals. In addition, we get a good starting point thanks to the inclusion~$\Z_\fld\subset\order$: the units~$\Z_{\fld,\fb}^\times$ provide all the relations up to a~$2$-elementary Abelian group.

\begin{lmsalgorithm}[\texttt{GBuild}] \label{gbuild}
\begin{algorithmic}[1]
	\INPUT a maximal order~$\order$ and a factor base~$\fb$.
	\OUTPUT a G-reduction structure.
	\FOR{$\prm\in\fb$ that splits in~$\alg$}
	  \STATE \texttt{PBuild}($\order, \prm$)\label{steppbuild}
	\ENDFOR
	\STATE $X\sto$ integral generators of~$\Z_{\fld,\fb}^\times$\label{stepsunits}
	\FOR{$\prm\in\fb$}
	  \STATE $I\sto $ integral $\order$-ideal of norm~$\prm$\label{stepidnormp}
	  \REPEAT\label{loop1}
	    \STATE $x\sto$ \texttt{NextElement}($I$)\label{stepnextgb1}
	  \UNTIL{$x$ is smooth}
	  \STATE $X\sto X\cup\{x\}$
	\ENDFOR
	\WHILE{$\gp{\fb}/\gp{\nrd(X)}\not\cong\clalg$}\label{loop2}
	  \STATE $x\sto$ \texttt{NextElement}($\order$)\label{stepnextgb2}
	  \LINEIF{$x$ is smooth}{$X\sto X\cup\{x\}$}
	\ENDWHILE
	\RETURN the G-reduction structure
\end{algorithmic}
\end{lmsalgorithm}

\begin{remark}
  The various calls to~\texttt{PBuild} in Step~\ref{steppbuild} are not completely independent: we can keep the elements in~$\order^\times$ from one call for other ones.
\end{remark}

\begin{proposition}\label{propgbuild}
  Algorithm~\ref{gbuild} is correct.
\end{proposition}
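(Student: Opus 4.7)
The plan is to verify that, assuming termination, Algorithm~\ref{gbuild} returns data satisfying Definition~\ref{defgred}. For condition~(1), correctness of \texttt{PBuild} (Subalgorithm~\ref{pbuild}) directly yields a $\prm$-reduction structure for every splitting $\prm\in\fb$ produced at Step~\ref{steppbuild}. For condition~(2), I would first check that the output set $X$ is contained in $\order\cap\alg^\times$: the integral $\fb$-units added at Step~\ref{stepsunits} lie in $\Z_\fld\subset\order$; the elements added in the first for-loop come from an integral right $\order$-ideal $I\subset\order$; and those added in the while-loop are drawn from $\order$ itself. In every case \texttt{NextElement} enumerates nonzero vectors, so $X\subset\order\cap\alg^\times$.

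The core of the proof is the construction of $\phi$. Consider the natural group homomorphism $\pi:\gp{\fb}\to\clalg$ sending a smooth ideal to its class in the ray class group. This map is surjective by the very definition of a factor base (Definition~\ref{deffb}). I claim that $\nrd(X)\subset\ker\pi$: every $x\in X$ is smooth by construction, so $x\in\order_\fb^\times$, and Eichler's norm theorem (Theorem~\ref{thmeichlernorm}) gives $\nrd(x)\in\Z_{\fld,\fb,\alg}^\times$, namely an $\fb$-unit that is positive at every real place of $\fld$ ramifying in $\alg$. Hence the principal ideal $(\nrd(x))$ admits a generator meeting exactly the sign conditions defining the modulus of $\clalg$, so it is trivial there. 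It follows that $\pi$ descends to a surjection $\bar\pi:\gp{\fb}/\gp{\nrd(X)}\twoheadrightarrow\clalg$. The exit condition of the while-loop at Step~\ref{loop2} ensures that the two finite groups have the same cardinality, so $\bar\pi$ is an isomorphism. I would then take $\phi$ to be any set-theoretic lift of $\bar\pi^{-1}$ sending $1$ to $\Z_\fld$, which is permissible since $\bar\pi(1)=1$.

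Regarding termination: the initial for-loop terminates by correctness of \texttt{PBuild}, and Step~\ref{stepsunits} terminates by Dirichlet's $S$-unit theorem applied to $\Z_\fld$. The loops searching for smooth elements at Steps~\ref{stepnextgb1} and~\ref{stepnextgb2} terminate under the heuristic density estimates for smooth elements that also drive the complexity analysis of Section~\ref{seccomplexity}. Unconditional termination of these searches is not claimed here, which is consistent with the ``heuristically subexponential'' scope announced in the abstract.

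The main obstacle is the identification of $\ker\pi$ with $\gp{\nrd(X)}$ at the level of the \emph{ray} class group $\clalg$ rather than the ordinary class group of $\fld$. The combined appeal to Eichler's norm theorem for the sign conditions at ramified real places and to Definition~\ref{deffb} for surjectivity is what collapses $\bar\pi$ into an isomorphism by the cardinality argument; everything else is bookkeeping of which ideals live in $\order$ and which lifts define $\phi$.
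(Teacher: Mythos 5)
Your verification that the returned data satisfies Definition~\ref{defgred} is fine (indeed more detailed than the paper's, which simply invokes Proposition~\ref{propp1search} and the exit condition of Step~\ref{loop2}); your use of Theorem~\ref{thmeichlernorm} to see that $\gp{\nrd(X)}$ dies in $\clalg$, surjectivity from Definition~\ref{deffb}, and the cardinality argument forced by the loop condition is a correct way to get the natural isomorphism that $\phi$ must lift. The genuine gap is termination. You declare that the smoothness-searching loops at Steps~\ref{stepnextgb1} and~\ref{stepnextgb2} terminate only ``under the heuristic density estimates,'' but Proposition~\ref{propgbuild} is an unconditional statement, and proving unconditional termination of exactly these two loops is the main content of the paper's proof; the heuristics of Section~\ref{seccomplexity} are invoked there only to bound the \emph{running time}, not to guarantee termination.

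The missing idea is that \texttt{NextElement} exhaustively enumerates the lattice it is given, so termination reduces to the mere \emph{existence} of suitable smooth elements, which follows from Eichler's theorem with no heuristic input. For the loop at Step~\ref{loop1}: since $\fb$ generates $\clalg$, Theorem~\ref{thmeichlerclass} provides a smooth integral ideal $J$ equivalent to the ideal $I$ of Step~\ref{stepidnormp}; writing $I=xJ$ one gets $x\order=IJ^{-1}$, hence $\nrd(J)x\in I\bar{J}\subset I$ is a smooth element of $I$, which the enumeration reaches after finitely many calls. For the loop at Step~\ref{loop2}: again by Theorem~\ref{thmeichlerclass}, $\gp{\fb}$ modulo the norms of smooth elements of $\alg^\times$ is isomorphic to $\clalg$, and since the relevant kernel is finitely generated there is a \emph{finite} set of $\fb$-smooth elements of $\order$ whose norms already realize this quotient; all of them are enumerated after finitely many iterations, at which point the loop condition fails. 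Without an argument of this kind your proof establishes only partial correctness, conditional on hypotheses the proposition does not assume.
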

\begin{proof}
  Let~$I$ be the ideal in Step~\ref{stepidnormp}. There exists a smooth ideal~$J$ equivalent to~$I$, let~$x\in\alg^\times$ be such that~$I=xJ$. Then~$x\order = IJ^{-1}$, so~$\nrd(J)x\in I\bar{J}\subset I$ is smooth. It will be enumerated at some point, so the loop starting at Step~\ref{loop1} terminates. Since~$\fb$ generates the class group~$\clalg$, by Eichler's theorem (Theorem~\ref{thmeichlerclass}) we have~$\gp{\fb}/\nrd(\alg^\times)\cong\clalg$, so there exists a finite set of~$\fb$-smooth elements~$X\subset\order$ such that~$\gp{\fb}/\gp{\nrd(X)}\cong\clalg$. We will enumerate this set at some point, so the loop starting at Step~\ref{loop2} terminates. So Algorithm~\ref{gbuild} terminates, and by Proposition~\ref{propp1search} and Step~\ref{loop2} it returns a correct G-reduction structure.
\end{proof}

\begin{remark}
  We have restricted to maximal orders to simplify the exposition, but this restriction can be weakened as follows. Let~$\order$ be an arbitrary order, and let~$S$ be the set of primes~$\prm$ of~$\Z_\fld$ such that~$\order$ is not $\prm$-maximal. The set $S$ is finite, so we can choose a factor basis disjoint from~$S$. Then our algorithms work unchanged for right $\order$-ideals except one point: Theorem~\ref{thmeichlerclass} characterizing principal ideals might no longer hold. If we restrict to Eichler orders, that is intersections of two maximal orders, Theorem~\ref{thmeichlerclass} still holds. Otherwise we need to find the suitable class group and change Definition~\ref{defgred} \ref{defgred2} accordingly.
  %
\end{remark}

\subsection{Compact representations}\label{seccr}
In the previous algorithms, the cost of elementary operations is important. Representing units as linear combinations of a basis of~$\order$ could be catastrophic: the classical example of real quadratic fields suggests that fundamental units in commutative orders, such as those produced by Subalgorithm~\ref{p1search}, can have exponential size. This problem is classically circumvented by representing units in \emph{compact form}: a product of small~$S$-units with possibly large exponents. The problem is reduced to computing efficiently with those compact representations. %
A natural notion of compact representation in~$\order$ would be to take ordered products of $S$-units in~$\order$ but we do not know how to compute efficiently with such a general representation. Instead we use a more restricted notion: 
we group the units belonging to a common commutative suborder, in which we can compute efficiently. This leads to the following definition.

\begin{definition}
  A \emph{compact representation} in~$\order$ is:
  \begin{enumerate}
    \item an element~$x\in\order$, or
    \item\label{typetwo} a product~$y = \prod_{i=1}^{r}y_i^{e_i}$ where the exponents are signed integers, the elements~$y_i$ all lie in a single ring~$\ring\subset \order$ containing~$\Z_\fld$ and such that~$y\in\ring^\times$, together with a~$\Z$-basis of the integral closure of~$\ring$ and a factorization of its conductor, or
    \item an ordered product of compact representations.
  \end{enumerate}
  A product~$y$ as in~\ref{typetwo} will be called a \emph{representation of type~\ref{typetwo}}.
\end{definition}

We describe the algorithms for representations of type~\ref{typetwo}, and they naturally extend by multiplicativity to arbitrary compact representations. We first explain the algorithm for local evaluation of compact representations. Since the product represents a unit, we can replace it with a product of local units, avoiding loss of precision despite the large exponents.
\begin{subalgorithm}[\texttt{EvalCR}] \label{evalcr}
\begin{algorithmic}[1]
	\INPUT a representation~$y = \prod_{i=1}^{r}y_i^{e_i}\in\ring$ of type~\ref{typetwo}, an ideal~$\idl\subset\Z_\fld$.
	\OUTPUT an element~$z\in\order$ such that~$z=y\pmod{\idl\order}$.
	\STATE $\fldext\sto$ field of fractions of~$\ring$
	\STATE $\cnd\sto$ conductor of~$\ring$ inside~$\Z_\fldext$
	\STATE $\prod_{j=1}^{k}\Prm_j^{w_j}\sto$ factorization of~$\idl\cnd\Z_\fldext$
	\FOR{$j=1$ to $k$}
	  \STATE $\phi \sto$ reduction map onto~$\Z_\fldext/\Prm_j^{w_j}$
	  \STATE $\pi\sto$ uniformizer in~$\Prm_j$, $v\sto v_{\Prm_j}$
	  \STATE $z_j\sto \prod_{i=1}^{r}\phi(y_i\pi^{-v(y_i)})^{e_i}$
	\ENDFOR
	\STATE $z\sto$ element in~$\Z_\fldext$ such that~$z=z_j\pmod{\Prm_j^{w_j}}$
	\RETURN $z$
\end{algorithmic}
\end{subalgorithm}

\begin{proposition}\label{propevalcr}
  Subalgorithm~\ref{evalcr} is correct.
\end{proposition}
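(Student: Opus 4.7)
The plan is to verify the two assertions packed into the output specification: $z \equiv y \pmod{\idl\order}$ and $z \in \order$. I will work locally at each prime $\Prm_j$ of $\Z_\fldext$ appearing in the factorization, glue via the Chinese Remainder Theorem to obtain a congruence modulo $\idl\cnd\Z_\fldext$, and finally transfer this congruence back to $\order$ using the defining property of the conductor.

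The crucial input is the hypothesis built into type-\ref{typetwo} representations: $y \in \ring^\times$. Being a global unit of $\ring$, $y$ has $v_{\Prm_j}(y) = 0$ for every prime $\Prm_j$ of $\Z_\fldext$. Since $v_{\Prm_j}$ is a group homomorphism and $y = \prod_i y_i^{e_i}$, this gives the identity $\sum_i e_i v_{\Prm_j}(y_i) = 0$, which is exactly what legitimizes stripping off the uniformizer factor.

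Fix a prime $\Prm_j$ dividing $\idl\cnd\Z_\fldext$, and set $u_i = y_i\pi^{-v_{\Prm_j}(y_i)}$. Each $u_i$ lies in the completion $(\Z_\fldext)_{\Prm_j}$ and is a local unit there, so $\phi(u_i)$ is well-defined as a unit of $\Z_\fldext/\Prm_j^{w_j}$ and can be raised safely to the signed exponent $e_i$. Then
\[
z_j \equiv \prod_i \phi(u_i)^{e_i} \equiv \Bigl(\prod_i y_i^{e_i}\Bigr)\cdot \pi^{-\sum_i e_i v_{\Prm_j}(y_i)} \equiv y\cdot \pi^0 \equiv y \pmod{\Prm_j^{w_j}}.
\]
Applying CRT across the finitely many $\Prm_j$ produces $z \in \Z_\fldext$ with $z \equiv y \pmod{\idl\cnd\Z_\fldext}$. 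The conductor inclusion $\cnd\Z_\fldext \subset \ring$ together with $\ring \subset \order$ then gives $\idl\cnd\Z_\fldext \subset \idl\ring \subset \idl\order$, so $z-y \in \idl\order$, as required. Moreover $z - y \in \cnd\Z_\fldext \subset \ring$ and $y \in \ring$, which forces $z \in \ring \subset \order$, confirming that the returned value is a legitimate element of $\order$.

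The main obstacle in this argument is the second step: giving a precise meaning to raising each $y_i$ to a potentially huge signed exponent $e_i$ modulo $\Prm_j^{w_j}$, since for instance the $y_i$ may individually have positive or negative $\Prm_j$-valuation and $\phi(y_i)$ could then be zero or undefined. The workaround is the substitution $y_i \mapsto u_i$, which turns every base into a local unit and keeps the exponentiation stable; the cancellation $\sum_i e_i v_{\Prm_j}(y_i) = 0$ ensures that this substitution does not change the value of the product, and this cancellation is available precisely because of the global unit hypothesis. This is the whole point of restricting type-\ref{typetwo} representations to units of a common commutative suborder.
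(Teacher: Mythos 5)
Your proof is correct and follows essentially the same route as the paper's: establish $z_j\equiv y\pmod{\Prm_j^{w_j}}$ by stripping uniformizers (using that $y$ is a unit so the valuations cancel), glue by CRT to get $z\equiv y\pmod{\idl\cnd\Z_\fldext}$, and use the conductor inclusions to conclude $z\in\ring\subset\order$ and $z\equiv y\pmod{\idl\order}$. The only cosmetic difference is that you justify the valuation cancellation directly from $y\in\ring^\times$ (and are slightly more explicit that each $\phi(u_i)$ is invertible, which legitimizes the negative exponents), whereas the paper invokes $\nrd(y)\in\Z_\fld^\times$; these amount to the same thing.
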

\begin{proof}
  First, we claim that for all~$j\le k$, we have~$z_j=y\pmod{\Prm_j^{w_j}}$. Since~$\nrd(y)\in\Z_\fld^\times$, we have~$\sum_{i=1}^r v(y_i)e_i=0$ so~$\prod_{i=1}^{r}(y_i\pi^{-v(y_i)})^{e_i} = \prod_{i=1}^{r}\pi^{-v(y_i)e_i}\prod_{i=1}^{r}y_i^{e_i}=y$. Since~$y_i\pi^{-v(y_i)}$ is integral at~$\Prm_j$, we can apply~$\phi$ to it and the claim follows. This implies that the output~$z$ of the algorithm satisfies~$z=y\pmod{\idl\cnd\Z_\fldext}$. In particular, we have~$z=y\pmod{\cnd\Z_\fldext}$ so~$z-y\in\ring$. Since~$y\in\ring$ we get~$z\in\ring\subset\order$. The relation~$\idl\cnd\Z_\fldext\subset\idl\ring\subset\idl\order$ shows that~$z=y\pmod{\idl\order}$.
\end{proof}

We can now explain how to multiply an ideal by a compact representation. To know an ideal, it suffices to know it up to large enough precision: we reduce the problem to local evaluation.
\begin{subalgorithm}[\texttt{MulCR}] \label{mulcr}
\begin{algorithmic}[1]
	\INPUT a representation~$y = \prod_{i=1}^{r}y_i^{e_i}\in\ring$ of type~\ref{typetwo}, an integral right~$\order$-ideal~$I$.
	\OUTPUT the ideal~$yI$.
	\STATE $\idl\sto \nrd(I)$
	\STATE $z\sto$ \texttt{EvalCR}($y, \idl$)
	\RETURN $zI+\idl\order$
\end{algorithmic}
\end{subalgorithm}

\begin{proposition}\label{propmulcr}
  Subalgorithm~\ref{mulcr} is correct.
\end{proposition}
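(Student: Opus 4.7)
The plan is to show directly that $zI + \idl\order = yI$ by double inclusion, relying on Proposition~\ref{propevalcr} (which gives $z \equiv y \pmod{\idl\order}$) together with a simple observation: for an integral right $\order$-ideal $J$, the central ideal $\nrd(J)\order$ sits inside $J$. Once that observation is in hand, both inclusions reduce to bookkeeping.

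First I would record the congruence. By Proposition~\ref{propevalcr} applied to $y$ and $\idl = \nrd(I)$, the element $z$ produced by \texttt{EvalCR} lies in $\order$ and satisfies $z - y \in \idl\order$. In particular $z \in \order$, so $zI$ and $zI + \idl\order$ are genuine right $\order$-submodules of $\alg$.

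Next I would establish the containment $\nrd(J)\order \subset J$ for any integral right $\order$-ideal $J$. It suffices to check this locally at each prime $\prm$, where $J_\prm = x\order_\prm$ for some $x \in \order_\prm$; since maximal orders are stable under quaternion conjugation, $\bar x \in \order_\prm$, and thus $\nrd(x) = x\bar x \in x\order_\prm = J_\prm$. Applying this to $J = I$ gives $\idl\order \subset I$, and applying it to $J = yI$ (which is integral because $y \in \ring \subset \order$, and satisfies $\nrd(yI) = \nrd(y)\idl = \idl$ since $y \in \ring^\times$ forces $\nrd(y) \in \Z_\fld^\times$) gives $\idl\order \subset yI$.

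Finally I would finish by inclusion chasing. Writing $z = y + \delta$ with $\delta \in \idl\order$, we have $zI \subset yI + \delta I \subset yI + \idl\order \cdot I \subset yI + \idl\order$; combined with $\idl\order \subset yI$ this yields $zI + \idl\order \subset yI$. Conversely $yI = zI + (y-z)I \subset zI + \idl\order\cdot I \subset zI + \idl\order$, using $I \subset \order$. Hence $zI + \idl\order = yI$, which is exactly what the algorithm returns. The only non-trivial point is the inclusion $\nrd(J)\order \subset J$ for integral $J$, which is handled locally as described; everything else is routine.
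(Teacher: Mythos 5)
Your proof is correct and follows essentially the same route as the paper: both use Proposition~\ref{propevalcr} to get $z\equiv y\pmod{\idl\order}$, deduce $zI+\idl\order=yI+\idl\order$, and conclude via $\idl\order\subset yI$ (which the paper infers from $\nrd(yI)=\idl$ and you justify by the local computation $\nrd(x)=x\bar x\in x\order_\prm$). Your explicit verification of the containment $\nrd(J)\order\subset J$ is just a spelled-out version of the step the paper leaves implicit, so there is nothing to add.
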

\begin{proof}
  By Proposition~\ref{propevalcr}, we have~$z=y\pmod{\idl\order}$ so~$(z-y)I\subset(z-y)\order\subset\idl\order$, which gives~$zI+\idl\order=yI+\idl\order$. Since~$y\in\order^\times$, we have~$\nrd(yI)=\nrd(I)=\idl$, so~$\idl\order\subset yI$ and finally~$yI+\idl\order=yI$. Therefore the output of the algorithm is correct.
\end{proof}

\section{Complexity analysis}\label{seccomplexity}
We perform a complete complexity analysis of our algorithms, assuming suitable heuristics. To simplify the notations, we set~$L(x)=\exp(\sqrt{\log x\log\log x})$. In every complexity estimate, the degree of the base field~$\fld$ is fixed. When we mention a complexity of the form~$\secompl$, we always implicitly mean~$\secompl$ times a polynomial in the size of the input.  We fix a parameter~$\alpha>0$. We will analyse our algorithm using the general paradigm that with a factor base of subexponential size, elements have a subexponential probability of being smooth. However, recall from Definition~\ref{deffb} that the factor base~$\fb$ is assumed to generate the ray class group~$\clalg$, so we need the following heuristic.

\begin{heuristic}\label{heurfb}
There exists a constant~$c=c_\alpha$ such that for every quaternion algebra~$\alg$ with absolute discriminant~$\Delta$, the set of primes having norm less than~$c\cdot L(\Delta)^{\alpha}$ generate the class group~$\clalg$.
\end{heuristic}

This heuristic is a theorem under the Generalized Riemann Hypothesis~\cite{bach}.  By Minkowski's bound, Heuristic~\ref{heurfb} is also true unconditionnally with the restriction that~$\log\Delta\gg_{\alpha} (\log |\disc_\fld|)^2$. 
From now on, we assume Heuristic~\ref{heurfb} and we assume that the factor base~$\fb$ is the set of primes having norm less than~$c\cdot L(\Delta)^{\alpha}$. Note that this implies the bound~$\#\fb \le L(\Delta)^{\alpha+o(1)}$.

\medskip

We start by analysing the complexity of elementary operations: Subalgorithm~\ref{locgene} and the algorithms of Section~\ref{seccr}.

\begin{lemma}\label{lemcompllocgene}
  Subalgorithm~\ref{locgene} terminates in time polynomial in the size of the input.
\end{lemma}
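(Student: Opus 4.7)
\medskip
\noindent\textbf{Proof plan.}
The plan is to bound every quantity manipulated by \texttt{LocalGenerator} polynomially in the input size, so that each arithmetic step is polynomial and the number of iterations is polynomial. The only nontrivial point is controlling the valuation $e = v_\prm(\nrd(b_1))$, since it governs the bit size of the ring $\ring = \Z_\fld/\prm^{e+1}$ in which the Euclidean computation takes place.

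First I would exploit the fact that $b_1$ comes from an LLL-reduced $\Z$-basis of $I$ for the $T_2$ form. LLL gives $T_2(b_1) \le 2^{O([\fld:\Q])} \lambda_1(I)^2$, and Minkowski's theorem bounds $\lambda_1(I) = O(\mathrm{covol}(I)^{1/n})$ with $n = 4[\fld:\Q]$. Since $\mathrm{covol}(I)$ is proportional to $\Delta^{1/2}\na(I)^2$ and since $|N(\nrd(x))|$ is bounded by a fixed power of $T_2(x)$ by AM--GM over the infinite places, I deduce that $\log|N(\nrd(b_1))| = O(\log\Delta + \log\na(I))$. In particular $e \le \log|N(\nrd(b_1))|/\log N(\prm)$ is polynomial in the input size, and so is the bit size of $\ring$.

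Next I would check that \texttt{DivideMatrix} and \texttt{GCDMatrix} run in polynomial time in the bit size of $\ring$. For \texttt{DivideMatrix}, the Smith and Hermite normal forms of a $2 \times 2$ matrix over $\ring$ reduce to a constant number of divisibility tests and elementary row/column operations in $\ring$, each of polynomial cost. For \texttt{GCDMatrix}, each recursive call strictly decreases $w(A) \in \{0,1,\dots,2e\}$ by the guarantee of Subalgorithm~\ref{divmat}, so the recursion depth is $O(e)$, which is polynomial. The main loop in \texttt{LocalGenerator} then makes $n = O(1)$ such calls, and the coefficients $\mu_i$ at Step~\ref{stepsolve} are recovered by solving a linear system over $\ring$ --- or, after lifting, over $\Z$ --- of constant rank with polynomial-bit coefficients, again standard.

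The main obstacle is really the first step: without the LLL-reduction, $b_1$ could have $T_2$ size exponential in the input, which would make $\ring$ exponentially large and all subsequent matrix arithmetic prohibitively expensive. Once the LLL bound on $T_2(b_1)$ is in place, everything else is routine polynomial-time arithmetic in $\ring$ and $\Z$.
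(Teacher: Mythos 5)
Your proposal is correct and follows essentially the same route as the paper: bound $e=v_\prm(\nrd(b_1))$ polynomially via lattice reduction, note that \texttt{DivideMatrix} is a constant number of elementary operations, and bound the recursion depth of \texttt{GCDMatrix} by the decrease of $w$ (i.e.\ at most $O(e)$ calls). You simply spell out the LLL/Minkowski step and the final linear-algebra step in more detail than the paper does, arriving at the same conclusion.
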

\begin{proof}
  Subalgorithm~\ref{divmat} (\texttt{DivideMatrix}) is made of a constant number of elementary operations, so it is polynomial. In Subalgorithm~\ref{gcdmat} (\texttt{GCDMatrix}) with~$\ring = \Z_\fld/\prm^i$, since the valuation of the determinant decreases at every recursive call, there are at most~$i$ such calls. When Subalgorithm~\ref{locgene} (\texttt{LocalGenerator}) calls Subalgorithm~\ref{gcdmat}, we have~$i=e+1=v_{\prm}(\nrd(b_1))+1 = O(\log \na(I))$ by lattice reduction. So the algorithm is polynomial in the size of the input.
\end{proof}

\begin{lemma}\label{lemcomplcr}
  Given the factorization of~$\idl$, Subalgorithm~\ref{evalcr} terminates in time polynomial in the size of the input. Given the factorization of~$\nrd(I)$, Subalgorithm~\ref{evalcr} terminates in time polynomial in the size of the input.
\end{lemma}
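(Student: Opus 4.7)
The plan is to treat the two halves of the lemma in order: since \texttt{MulCR} (Subalgorithm~\ref{mulcr}) reduces immediately to \texttt{EvalCR} plus a standard ideal-sum computation, all the real work is in bounding \texttt{EvalCR}. I would begin by tracking the size of the data: the compact representation carries the integral closure of~$\ring$ and a factorization of its conductor~$\cnd$, and the factorization of~$\idl$ is given by hypothesis, so one can assemble the factorization of~$\idl\cnd\Z_\fldext$ in polynomial time. The only nontrivial point here is decomposing primes of~$\Z_\fld$ in~$\Z_\fldext$, but because~$\fldext=\fld(x)$ has degree at most~$2$ over~$\fld$ (since~$x$ lies in the quaternion algebra~$\alg$), this amounts to factoring a polynomial of degree at most~$2$ over~$\fldfin{\prm}$ for each prime~$\prm\mid\idl\cnd$, which is polynomial.

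Next I would analyse the inner loop at a prime~$\Prm_j$. The main observation is that although the exponents~$e_i$ and the valuations~$v(y_i)$ may be large in absolute value, their bit-sizes are bounded by the size of the input, and all computations are carried out in the residue ring~$\Z_\fldext/\Prm_j^{w_j}$, which has polynomial size. Because~$y_i\pi^{-v(y_i)}$ is by construction a~$\Prm_j$-adic unit, its image in~$\Z_\fldext/\Prm_j^{w_j}$ is obtained by one local inversion modulo a prime power, which is polynomial. Binary exponentiation then yields~$\phi(y_i\pi^{-v(y_i)})^{e_i}$ in time polynomial in~$\log|e_i|$ and the size of the modulus, and the product over~$i=1,\dots,r$ involves only~$r$ multiplications, which is polynomial. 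Combining the residues~$z_j$ by the Chinese Remainder Theorem is a standard polynomial step, and the resulting~$z\in\Z_\fldext$ has polynomial size modulo~$\idl\cnd\Z_\fldext$.

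For \texttt{MulCR}, given the factorization of~$\nrd(I)=\idl$, one call to \texttt{EvalCR} is polynomial by the first part, and the final ideal~$zI+\idl\order$ is computed as a lattice sum in Hermite Normal Form, again polynomial in the input size.

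The main obstacle, as I see it, is the rigorous handling of the local unit extraction~$y_i\pi^{-v(y_i)}\pmod{\Prm_j^{w_j}}$: the element~$y_i\pi^{-v(y_i)}$ is generally not an algebraic integer, so one must interpret the reduction through the localization at~$\Prm_j$ and verify that, after the preliminary local inversion, all subsequent arithmetic stays in a polynomial-size residue ring. Once this is formalised, the rest of the proof is a routine sum of the sizes of the~$O(\log\na(\idl\cnd))$ prime-by-prime contributions and the CRT step.
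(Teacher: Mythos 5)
Your proposal is correct and follows essentially the same route as the paper's (much terser) proof: the loop length is polynomial, the only potentially expensive data --- $\Z_\fldext$, the factorization of~$\cnd$, and the factorization of~$\idl$ --- are supplied by the compact representation and the hypothesis, all residue-ring arithmetic (including exponentiation by large~$e_i$) is polynomial, and the second half reduces to one call to \texttt{EvalCR} plus an HNF/ideal-sum computation. You also correctly read the second sentence of the lemma as concerning Subalgorithm~\ref{mulcr}, which is what the paper's proof actually addresses; your extra detail on reducing the $\Prm_j$-units $y_i\pi^{-v(y_i)}$ via the localization is a sound filling-in of a point the paper leaves implicit.
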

\begin{proof}
 In Subalgorithm~\ref{evalcr}, the number of iterations of the loop is polynomial in the size of the input. The only operations that could possibly not be polynomial in the size of the input are the computation of~$\Z_\fldext$ and the factorization of~$\idl\cnd\Z_L$, but~$\Z_\fldext$ and the factorization of~$\cnd$ are contained in the compact representation, and the factorization of~$\idl$ is assumed to be given. So the algorithm is polynomial in the size of its input. Since the HNF of the output can be computed in polynomial time~\cite{hnf}, Subalgorithm~\ref{evalcr} is also polynomial.
\end{proof}

The restriction on the factorization is not a problem in our application: every ideal on which we call these algorithms is smooth.

\medskip

Since our algorithms use their commutative counterparts, we have to make assumptions on the algorithms used to compute commutative unit groups.
\begin{heuristic}\label{heurfastunits}
  There is an explicit algorithm that, given a number field~$\fldb$ with discriminant~$\disc_\fldb$, an order~$\ring$ in~$\fldb$ and a bound~$b = L(\disc_\fldb)^{O(1)}$, computes a set~$U$ of integral generators for the~$S$-unit group of~$\Z_\fldb$, where~$S$ is the set of primes of norm less than~$b$, and generators for the unit group~$\ring^\times$ expressed as products of elements in~$U$, in expected time~$L(\disc_\fldb)^{O(1)}$.
\end{heuristic}
This is a strong hypothesis. However, under the Generalized Riemann Hypothesis it is a theorem for quadratic number fields~\cite{hmc,voll} and experience has shown that it is not an unreasonable assumption\footnote{The PARI developers experimented extensively with this algorithm in the past twenty years, as implemented in the PARI/GP function \texttt{bnfinit}, while building  and checking tables of  number fields of small degree [Ref: \url{http://pari.math.u-bordeaux1.fr/pub/pari/packages/nftables/}], as well as with number fields of much larger degree. E.g. the class group and unit group of $\fld = \Q[t]/(t^{90}-t^2-1)$, $d_\fld > 10^{175}$ can be computed in a few hours (Karim Belabas, personal communication).}.


\medskip

Since the reduction algorithms depend on the structures that are given as input, we analyse the algorithms building the reduction structures before the reduction algorithms. We start with Subalgorithm~\ref{p1search}, for which we need some heuristic assumptions.

\begin{heuristics}\label{heurunits} In Subalgorithm~\ref{p1search}, we assume the following.
\begin{enumerate}[(i)]
  \item\label{heurunits1} If~$\fld$ is totally real, a positive proportion of~$x$ satisfies the condition of Step~\ref{stepposrk} that~$\fld(x)/\fld$ has positive relative unit rank.
  \item\label{heurunits2} The images in~$\PGL_2(\fldfin{\prm})$ of the units produced at Step~\ref{stepunits} are uniformly distributed in the image of~$\order^\times$ in~$\PGL_2(\fldfin{\prm})$.
\end{enumerate}
\end{heuristics}

%

\begin{proposition}\label{propcomplp1search}
  Assuming Heuristics~\ref{heurfastunits} and~\ref{heurunits}, the expected running time of Subalgorithm~\ref{p1search} is at most~$\secompl$.
\end{proposition}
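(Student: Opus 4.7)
The plan is to bound separately the expected cost per iteration of the main loop and the expected number of iterations, showing that both are $\secompl$.

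For the cost per iteration, applying Minkowski to the lattice $\order$ (which has rank $4[\fld:\Q]$ and covolume $\Delta^{1/2}$) shows that the $N$th element returned by \texttt{NextElement}($\order$) has $T_2$-norm bounded by $O((N\Delta^{1/2})^{1/(4[\fld:\Q])})$. Granting that $N\le\secompl$ (verified below) and recalling that $[\fld:\Q]$ is fixed, $x$ has coefficient size $\Delta^{O(1)}$ and $\fldext=\fld(x)$ satisfies $|\disc_\fldext|\le\Delta^{O(1)}$. Every subsequent step of the iteration is polynomial in the input size --- factoring the minimal polynomial of $x$ to recognise $\fldext$, computing $\ring=\Z_\fldext\cap\orderb$, and testing transitivity of the updated $X$ on $\Proj^1(\fldfin{\prm})$ by orbit enumeration on a set of size at most $\secompl$ --- except for the call computing a generating set of $\ring^\times$, which by Heuristic~\ref{heurfastunits} has expected cost $L(|\disc_\fldext|)^{O(1)}=\secompl$.

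For the number of iterations, Heuristic~\ref{heurunits}~(i) (together with an analogous observation via Dirichlet's unit theorem when $\fld$ is not totally real, so that the generic quadratic extension $\fldext/\fld$ has positive relative rank) guarantees that a positive proportion of values of $x$ satisfy the condition of Step~\ref{stepposrk}. By Heuristic~\ref{heurunits}~(ii) the images in $H:=\mathrm{im}(\orderb^\times\to\PGL_2(\fldfin{\prm}))$ of the units so produced are independent uniform samples from $H$; by strong approximation (Theorem~\ref{thmstrongapprox}) $H$ acts transitively on $\Proj^1(\fldfin{\prm})$. Since $\prm\in\fb$ we have $|\fldfin{\prm}|\le\secompl$, and Dickson's classification bounds the number of subgroups of $\PGL_2(\fldfin{\prm})$ by $|\fldfin{\prm}|^{O(1)}\le\secompl$. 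For any proper subgroup $M<H$ the probability that $k$ independent uniform samples all land in $M$ is $[H:M]^{-k}\le 2^{-k}$; a union bound yields that $k=O(\log\Delta)$ samples suffice to generate $H$ (hence a transitive subgroup) with probability at least $1/2$. Combined with Heuristic~\ref{heurunits}~(i), this produces an expected $N=O(\log\Delta)\le\secompl$ total iterations, closing the argument.

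The main obstacle is the random-sampling step: controlling how many uniform samples from $H$ are needed to generate a transitive subgroup. I would appeal to Dickson's classification of subgroups of $\PGL_2$ over a finite field, which reduces the matter to a routine union bound once Heuristic~\ref{heurunits}~(ii) is in hand. A secondary nuisance is the case $\fld$ not totally real in Heuristic~\ref{heurunits}~(i), where one should check via standard Dirichlet theory that the signature behaviour of quadratic extensions forces positive relative unit rank generically; this produces only polynomial overhead and does not affect the final asymptotic $\secompl$.
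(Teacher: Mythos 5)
Your proposal is correct in outline and reaches the same two targets as the paper (expected number of iterations small; expected cost per iteration $\secompl$ via Heuristic~\ref{heurfastunits}), but it takes a genuinely different route for the group-theoretic step. The paper gets $O(1)$ expected iterations: it notes that the image of $\orderb^\times$ in $\PGL_2(\fldfin{\prm})$ contains $\PSL_2(\fldfin{\prm})$ with index at most $2$, so by Heuristic~\ref{heurunits}~(ii) each produced unit lands in $\PSL_2(\fldfin{\prm})$ with probability at least $1/2$ and is equidistributed there, and then invokes Kantor--Lubotzky (the probability that two random elements generate $\PSL_2$ of a finite field is bounded below by an absolute constant). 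You instead run a union bound over proper subgroups of the image $H$, getting $O(\log\Delta)$ iterations; this is more elementary, avoids the citation, and still yields $\secompl$, so the weaker bound costs nothing. Two caveats on your version: first, the number of \emph{all} subgroups of $\PGL_2(\fldfin{\prm})$ is not $|\fldfin{\prm}|^{O(1)}$ in general (elementary abelian $p$-subgroups of the unipotent radical can be super-polynomially many when the residue field has large degree over its prime field); here it is saved only because $[\fld:\Q]$ is fixed, and the cleaner fix is to sum over \emph{maximal} subgroups, whose number is polynomial and whose indices are at least $2$. Second, your claim that everything outside the unit-group call is ``polynomial'' glosses over the reason Section~\ref{seccr} exists: Heuristic~\ref{heurfastunits} returns generators of $\ring^\times$ only in compact form, and to test transitivity you must compute their images in $\PGL_2(\fldfin{\prm})$; naive expansion can be exponential, and direct modular exponentiation of the factors can fail when a factor is not a $\prm$-adic unit, so you should invoke Subalgorithm~\ref{evalcr} and Lemma~\ref{lemcomplcr} as the paper does. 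Finally, when $\fld$ is not totally real no ``generic'' Dirichlet argument is needed: every quadratic extension of such a field has positive relative unit rank, since each complex place of $\fld$ contributes to the rank difference, which is how the paper dispatches that case.
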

\begin{proof}
  We first prove that the expected number of iterations of the loop is~$O(1)$. If $\fld$ is not totally real, the relative unit rank condition is always satisfied, so by Heuristic~\ref{heurunits}~(\ref{heurunits1}) a positive proportion of the iterations of the loop produce a unit.
  By strong approximation, the image of~$\order^\times$ in~$\PGL_2(\fldfin{\prm})$ contains~$\PSL_2(\fldfin{\prm})$, and the index is at most~$2$. By Heuristic~\ref{heurunits}~(\ref{heurunits2}), with probability at least~$1/2$ the image of the unit produced at Step~\ref{stepunits} is in~$\PSL_2(\fldfin{\prm})$, and the corresponding images are equidistributed in~$\PSL_2(\fldfin{\prm})$. By~\cite{lubo}, the probability that two random elements of~$\PSL_2(\fldfin{\prm})$ generate this group is bounded below by a constant. 
  Therefore, after an expected number of iterations~$O(1)$, the image of~$X$ generates~$\PSL_2(\fldfin{\prm})$ and hence acts transitively on~$\Proj^1(\fldfin{\prm})$.
  \par Each computation of a unit group in Step~\ref{stepunits} takes expected time~$\secompl$ by Heuristic~\ref{heurfastunits} since the discriminant of~$\Z_\fld[x]$ is~$\Delta^{O(1)}$. The units are stored in compact representation and we use Subalgorithm~\ref{evalcr} (\texttt{EvalCR}) to compute the action on~$\Proj^1(\fldfin{\prm})$, so by Lemma~\ref{lemcomplcr} this takes total time~$\secompl$.
\end{proof}


\begin{heuristics}\label{heursmoothdist} Let~$\Z_{\fld,\fb,\alg}^\times$ be the group of~$\fb$-units in~$\Z_\fld$ that are positive at every real place ramified in~$\alg$. In Algorithm~\ref{gbuild}, we assume the following.
\begin{enumerate}[(i)]
  \item\label{heursmoothdist1} There exists a constant~$\beta>0$ such that the elements~$x$ produced in Steps~\ref{stepnextgb1} and~\ref{stepnextgb2} are smooth with probability at least~$L(\Delta)^{-\beta+o(1)}$.
  \item\label{heursmoothdist2} The norms of the smooth elements produced in Step~\ref{stepnextgb2} are equidistributed in~$\Z_{\fld,\fb,\alg}^\times / \Z_{\fld,\fb}^{\times 2}$.
\end{enumerate}
\end{heuristics}

%

By comparison with the case of integers~\cite[Equation (1.16) and Section 1.3]{granville}, $\beta=1/(2\alpha)$ could be a reasonable value.


\begin{theorem}\label{thmcomplgbuild}
  Assume Heuristics~\ref{heurfastunits},~\ref{heurunits}, \ref{heurfb} and~\ref{heursmoothdist}. Then, given a maximal order~$\order$ in an indefinite quaternion algebra~$\alg$, Algorithm~\ref{gbuild} (\texttt{GBuild}) terminates in expected time~$\secompl$.
\end{theorem}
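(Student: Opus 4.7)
The plan is to bound each phase of Algorithm~\ref{gbuild} separately and add. Under Heuristic~\ref{heurfb} the factor base has size $\#\fb \le L(\Delta)^{\alpha+o(1)}$. Phase one, the $\#\fb$ calls to \texttt{PBuild} in Step~\ref{steppbuild}, costs $\#\fb$ times the bound of Proposition~\ref{propcomplp1search}, and so fits in $\secompl$. Phase two, the $\fb$-unit computation in Step~\ref{stepsunits}, is $\secompl$ by Heuristic~\ref{heurfastunits} applied to $\fld$ itself. Phase three, the loop starting at Step~\ref{loop1}, finds one smooth element in each of $\#\fb$ ideals of prime norm: by Heuristic~\ref{heursmoothdist}~(\ref{heursmoothdist1}) each such search costs $L(\Delta)^{\beta+o(1)}$ calls to \texttt{NextElement}, each polynomial in $\log\Delta$, while smoothness testing reduces to computing $v_\prm(\nrd(x))$ for $\prm\in\fb$. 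Multiplied by $\#\fb$, this phase again totals $\secompl$.

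The delicate part is phase four, the loop of Step~\ref{loop2}. I will combine the two parts of Heuristic~\ref{heursmoothdist} with Eichler's theorems to show that it terminates in expected subexponentially many iterations. By Theorem~\ref{thmeichlerclass} the termination condition $\gp{\fb}/\gp{\nrd(X)} \cong \clalg$ is equivalent to $\gp{\nrd(X)} = \gp{\fb}\cap\nrd(\alg^\times)$, and Theorem~\ref{thmeichlernorm} identifies the right-hand side with $\Z_{\fld,\fb,\alg}^\times$. Because the initial $X$ already contains integral generators of $\Z_{\fld,\fb}^\times$ from Step~\ref{stepsunits}, and $\nrd(u)=u^2$ for central $u\in\Z_\fld$, we obtain $\Z_{\fld,\fb}^{\times 2}\subset\gp{\nrd(X)}$; the residual task is to generate the finite $\mathbb{F}_2$-vector space
\[V = \Z_{\fld,\fb,\alg}^\times / \Z_{\fld,\fb}^{\times 2}.\]
By Dirichlet's $S$-unit theorem and $[\fld:\Q]=O(1)$, one has $\dim_{\mathbb{F}_2} V \le \#\fb + O(1) = L(\Delta)^{\alpha+o(1)}$. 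Heuristic~\ref{heursmoothdist}~(\ref{heursmoothdist2}) then says that each successful iteration contributes an independent uniform sample in $V$, so a standard random $\mathbb{F}_2$-linear-algebra estimate yields $\dim V + O(1)$ expected successes to span $V$. Heuristic~\ref{heursmoothdist}~(\ref{heursmoothdist1}) bounds the cost of one success by $L(\Delta)^{\beta+o(1)}$, and the iterated rank test against $\clalg$ is polynomial in $\#\fb$ via standard integer linear algebra on the exponent vectors of $\nrd(X)$. Hence phase four also fits in $\secompl$.

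The main obstacle is extracting, from the termination condition, a finite abelian group of polynomial $\mathbb{F}_2$-dimension in which new relations land uniformly; once this reduction is made, the combinatorial argument is standard. Summing the contributions of the four phases produces the claimed expected running time~$\secompl$.
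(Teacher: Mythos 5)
Your proof is correct and follows essentially the same route as the paper: bound the \texttt{P1Search}/\texttt{PBuild} calls, the $\fb$-unit computation, and the first loop via Heuristic~\ref{heursmoothdist}~(\ref{heursmoothdist1}), then reduce termination of the second loop to generating the $2$-elementary group $\Z_{\fld,\fb,\alg}^\times/\Z_{\fld,\fb}^{\times 2}$ of rank $\#\fb+O(1)$, using Heuristic~\ref{heursmoothdist}~(\ref{heursmoothdist2}) for the expected number of smooth elements and linear algebra for the termination test. Your explicit appeal to Theorems~\ref{thmeichlerclass} and~\ref{thmeichlernorm} is just an unpacking of the short exact sequence the paper writes down, so the arguments coincide.
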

\begin{proof}
  There are~$2\cdot \#\fb = \secompl$ calls to Subalgorithm~\ref{p1search} (\texttt{P1Search}). By 	Proposition~\ref{propcomplp1search}, these calls take total time~$\secompl$. The computation of the group~$\Z_{\fld,\fb}^\times$ takes time~$L(\disc_\fld)^{O(1)}=\secompl$ by Heuristic~\ref{heurfastunits}. By Heuristic~\ref{heursmoothdist}~(\ref{heursmoothdist1}), the expected number of iterations in the loop starting at Step~\ref{loop1} is at most~$\secompl$ for each~$\prm\in\fb$, so the total expected number of iterations of this loop is at most~$\#\fb\cdot \secompl=\secompl$.
  \par We study the loop starting at Step~\ref{loop2}. After Step~\ref{stepsunits}, we have~$\gp{\fb}/\gp{\nrd(X)}=\gp{\fb}/\Z_{\fld,\fb}^{\times 2}$. Let~$C$ be the group~$\Z_{\fld,\fb,\alg}^\times / \Z_{\fld,\fb}^{\times 2}$. There is an exact sequence
  \[ 1 \longrightarrow C \longrightarrow \gp{\fb}/\Z_{\fld,\fb}^{\times 2} \longrightarrow \clalg \longrightarrow 1\text{,}\]
  so that the loop terminates if and only if the image of~$\nrd(X)$ generates the group~$C$. We have~$\# C = 2^{\#\fb + O(1)}$, so by Heuristic~\ref{heursmoothdist}~(\ref{heursmoothdist2}) this happens after we find an expected number of~$\#\fb+O(1)$ smooth elements. By Heuristic~\ref{heursmoothdist}~(\ref{heursmoothdist1}), the total expected number of iterations of this loop is at most~$\#\fb\cdot \secompl=\secompl$. Checking the loop condition~$\gp{\fb}/\gp{\nrd(X)}\not\cong\clalg$ amounts to linear algebra, so it also takes time~$\secompl$. This proves the theorem.
\end{proof}

\begin{theorem}
  Assume Heuristics~\ref{heurfastunits},~\ref{heurunits}, \ref{heurfb} and~\ref{heursmoothdist}. Then, given the G-reduction structure computed by Algorithm~\ref{gbuild} and a smooth integral right $\order$-ideal~$I$, Algorithm~\ref{greduce} (\texttt{GReduce}) terminates in expected time~$\secompl$.
\end{theorem}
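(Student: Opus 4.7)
The plan is to bound the cost of each line of Algorithm~\ref{greduce} separately, showing that every step runs in time $\secompl$ and that the main loop, although it runs over all of $\fb$, contributes only subexponentially in total. Since $I$ is $\fb$-smooth by hypothesis, $\idl = \nrd(I)$ is known as a product over $\fb$, and $\phi(\idl)$ is obtained directly from the data stored in the G-reduction structure. To solve for the exponent vector $e$ in $\gp{\fb}/\gp{\nrd(X)}$ one performs HNF on the relation module of rank $\#X \le L(\Delta)^{O(1)}$, which runs in time polynomial in $\#\fb$ and in the bit length of $\idl\idlb^{-1}$; a standard height bound gives entries $e_x$ of size $\log \#\clalg + L(\Delta)^{O(1)} = L(\Delta)^{O(1)}$.

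The delicate point is Steps~\ref{stepgmul}--\ref{stepgts}. The element $c = \prod_x x^{e_x \bmod 2}$ has each exponent in $\{0,1\}$ and is kept symbolically as a compact representation of Section~\ref{seccr}; the two-sided factor $f$ is stored as a factored fractional ideal. Hence $\log\na(c) \le \sum_x \log\na(x) = L(\Delta)^{O(1)}$, so $\log\na(J) = L(\Delta)^{O(1)}$ after $J \leftarrow cI$. This product is computed by applying \texttt{MulCR} (Subalgorithm~\ref{mulcr}) once for each factor: by Lemma~\ref{lemcomplcr} each call is polynomial, and there are at most $\#X \le L(\Delta)^{O(1)}$ of them. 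Extracting the two-sided part at Step~\ref{stepgts} is then standard $\Z_\fld$-lattice arithmetic on data of polynomial size.

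The main loop at Step~\ref{stepgloop} runs over at most $\#\fb = L(\Delta)^{\alpha+o(1)}$ primes. For each $\prm$, Subalgorithm~\ref{preduce} performs at most $2k+O(1)$ iterations with $k = v_\prm(\nrd(J))$, and summing $k$ over all $\prm$ in the loop is bounded by $\log\na(J) = L(\Delta)^{O(1)}$. Each individual iteration calls \texttt{LocalGenerator} once, which is polynomial by Lemma~\ref{lemcompllocgene}, identifies the next vertex of $\tree_\prm$ by a computation over $\fldfin{\prm}$ of size at most $L(\Delta)^{\alpha+o(1)}$, and multiplies $J$ by a unit $g$ supplied in compact form by the $\prm$-reduction structure, again through \texttt{MulCR}.

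The main obstacle is precisely to guarantee that no coefficient explosion occurs: the element $c$ is never expanded to an element of $\order$ but only composed with ideals whose reduced norms remain subexponential, so that Subalgorithm~\ref{evalcr} is invoked with targets of subexponential precision at each step, and by Lemma~\ref{lemcomplcr} all the underlying arithmetic stays polynomial. Aggregating the bounds across all steps yields total expected time $\secompl$.
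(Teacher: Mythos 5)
Your proof is correct and follows essentially the same route as the paper: bound the linear-algebra phase, use the compact-representation lemmas (Lemmas~\ref{lemcompllocgene} and~\ref{lemcomplcr}) to keep every elementary operation polynomial in its input, and bound the total number of \texttt{PReduce} iterations by $\log\na(cI)\le\log\na(I)+\sum_{x\in X}\log\na(x)$, which is subexponential because the G-reduction structure output by \texttt{GBuild} has expected size $\secompl$ (Theorem~\ref{thmcomplgbuild}). One cosmetic slip: \texttt{LocalGenerator} is invoked once per call to \texttt{PReduce}, before its loop, not once per iteration, but this does not affect the bound.
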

\begin{proof}
  First, since by Theorem~\ref{thmcomplgbuild}, Algorithm~\ref{gbuild} terminates in expected time~$\secompl$ so in particular the expected size of its output is also at most~$\secompl$. In Subalgorithm~\ref{greduce} (\texttt{GReduce}), the first part is linear algebra so it takes time~$\secompl$. By Lemma~\ref{lemcomplcr}, all the elementary operations can be performed in time polynomial in the size of their input.

  We analyse the calls to Subalgorithm~\ref{preduce} (\texttt{PReduce}). In this subalgorithm, the variable~$k$ decreases by~$2$ every two iterations and the initial value of~$k$ is bounded by~$v_\prm(\nrd(J))$, so the algorithm terminates after at most~$v_\prm(\nrd(J))$ iterations by the loop condition. So the total number of iterations in the calls to Subalgorithm~\ref{preduce} is bounded by~$\sum_{\prm\in\fb}v_\prm(\nrd(J))\le \log_2\na(J) \le \log_2(N_X\cdot \na(I))$ by Step~\ref{stepgmul} of Subalgorithm~\ref{greduce}, where~$N_X = \prod_{x\in X}\na(x)$. But~$\log\na(I)$ is polynomial in the size of the input and~$\log N_X$ is polynomial in the size of the G-reduction structure, which is~$\secompl$. This proves the theorem.

\end{proof}

Finally, for a general integral right $\order$-ideal, repeated attempts with Algorithm~\ref{isprincipal} (\texttt{IsPrincipal}) takes total expected time~$\secompl$ if we assume the following heuristic.

\begin{heuristic}\label{heursmoothen}
  There exists a constant~$\gamma>0$ such that in Step~\ref{stepsmooth} of Algorithm~\ref{isprincipal}, the element~$x$ is smooth with probability at least~$L(\Delta)^{-\gamma+o(1)}$.
\end{heuristic}
Again, by comparison with the case of integers~\cite{granville}, $\gamma=1/(\sqrt{2}\alpha)$ could be a reasonable value.

\section{Examples}\label{secex}
We have implemented the above algorithms in the computer algebra system Magma~\cite{magma}. In this section, we demonstrate how our algorithms work and perform in practice. Every computation was performed on a $2.5$~GHz Intel Xeon E5420 processor with Magma v2.20-5 from the PLAFRIM experimental testbed.


\paragraph{Example 1.}Let~$\alg$ be the quaternion algebra over~$\Q$ generated by two elements~$i,j$ such that~$i^2=3$, $j^2=-1$ and~$ij=-ji$. The algebra~$\alg$ is ramified at~$2$ and~$3$ and unramified at every other place: $\alg$ is indefinite and our method applies. Let~$\order$ be the maximal order~$\Z+\Z i+\Z j+\Z\omega$ where~$\omega=(1+i+j+ij)/2$. We construct a reduction structure with Algorithm~\ref{gbuild} (\texttt{GBuild}) and factor base~$\fb=\{2,3,5,7,11,13,17\}$. Let~$I=19\order+a\order$ where~$a=-3-4i+j$, so that~$\nrd(I)=19\Z$. We use Algorithm~\ref{isprincipal} (\texttt{IsPrincipal}) to compute a generator of~$I$. It finds an element
~$x=(7+i-9j-3\omega)/19\in I^{-1}$ such that~$\nrd(xI)=7\Z$, so that~$xI$ is smooth. The linear algebra phase in Subalgorithm~\ref{greduce} (\texttt{GReduce}) computes
~$c=-1-2i-j+\omega$ having norm~$-7$ and~$f=1/7$. We obtain~$J=49\order+b\order$ with~$b=-17-8i+j$ and~$\tsidl=7^{-1}\order$ before the local reduction. We reduce the ideal~$J$ at~$7$. %
In Subalgorithm~\ref{preduce} (\texttt{PReduce}), at the first iteration we have~$P=P_1$ so~$c=1$. In the second iteration we have
~$c=(-9-5i-7j-3\omega)/7$: the element~$c$ has norm~$1$ and~$r=1$. After multiplying every element, we obtain the output
~$c=7/19\cdot(8+4i+3j-11\omega), f=1/7$ and
~$x=(cf)^{-1}=3+4i-3j-11\omega$ has norm~$-19$: $x$ is a generator of the ideal~$I$.



\paragraph{Example 2.}Let~$\fld$ be the complex cubic field of discriminant~$-23$, which is generated by an element~$t$ such that~$t^3-t+1=0$. Let~$\alg$ be the quaternion algebra over~$\fld$ generated by two elements~$i,j$ such that~$i^2=2t^2+t-3$,~$j^2=-5$ and~$ij=-ji$. The algebra~$\alg$ is ramified at the real place of~$\fld$ and the discriminant~$\delta_\alg$ has norm~$5$. All the maximal orders in~$\alg$ are conjugate and we compute one of them with Magma. Algorithm~\ref{gbuild} (\texttt{GBuild}) constructs the reduction structure in~$4$ seconds. We then compute the~$22$ primes of~$\fld$ coprime to~$\delta_\alg$ and having norm less than~$100$. For every such prime~$\prm$, we construct a random integral right $\order$-ideal~$I$ with norm~$\prm$. Since~$\clalg$ is trivial, they are all principal. We apply Algorithm~\ref{isprincipal} (\texttt{IsPrincipal}) to compute a generator of each of these ideals. %
This computation takes~$0.3$ seconds per ideal on average with a maximum of~$0.9$ seconds. As a comparison, we compute generators for the same ideals with the function provided by Magma. This computation takes~$4$~hours per ideal on average with a maximum of~$69$h, and~$5$ of the~$23$ ideals take less that~$0.1$~seconds.


\paragraph{Example 3.}When the base field is totally real and the algebra is ramified at every real place except one, there is an algorithm of Voight~\cite{vfuchsian} for computing the unit group of an order. In~\cite[p.~25]{vd}, Demb\'el\'e and Voight mention but do not describe an unpublished algorithm using this computation to speedup ideal principalization. This algorithm is provided in Magma\footnote{\texttt{IsPrincipal(<Any> I, <GrpPSL2> Gamma) -> BoolElt, AlgQuatElt}} and improves on the algorithm of~\cite{kv}. 
Let~$\fld$ be the real cubic field of discriminant~$3132=2^2\cdot 3^3\cdot 29$, which is generated by an element~$t$ such that~$t^3-15t+6=0$. Let~$\alg$ be the quaternion algebra over~$\fld$ generated by two elements~$i,j$ such that~$i^2=-1$,~$j^2=(141t^2+57t-2092)/2$ and~$ij=-ji$. The algebra~$\alg$ is ramified at two of the three real places of~$\fld$ and no finite place. %
We compute a maximal order in~$\alg$ and then construct the reduction structure in~$14$ seconds. 
We produce a random integral ideal of norm~$\prm$ for each prime~$\prm$ having norm less than~$100$ and compute a generator for each of them with our algorithm. The computation takes~$1.5$~seconds per ideal on average with a maximum of~$4.6$~seconds. With Magma we compute the unit group~$\order^\times$ in~$8$~minutes and then compute generators for the same ideals with the units-assisted algorithm~\cite[p.~25]{vd} provided by Magma. The computation takes~$1$ hour per ideal on average with a maximum of~$17$h, and~$10$ of the~$23$ ideals take less that~$0.5$~seconds. Magma tends to return smaller generators than our algorithm. Magma is fast whenever there exists a small generator and our algorithm is faster when this is not the case.

\begin{figure}[tbh]
\centering
\includegraphics*[height=9.5cm,keepaspectratio=true]{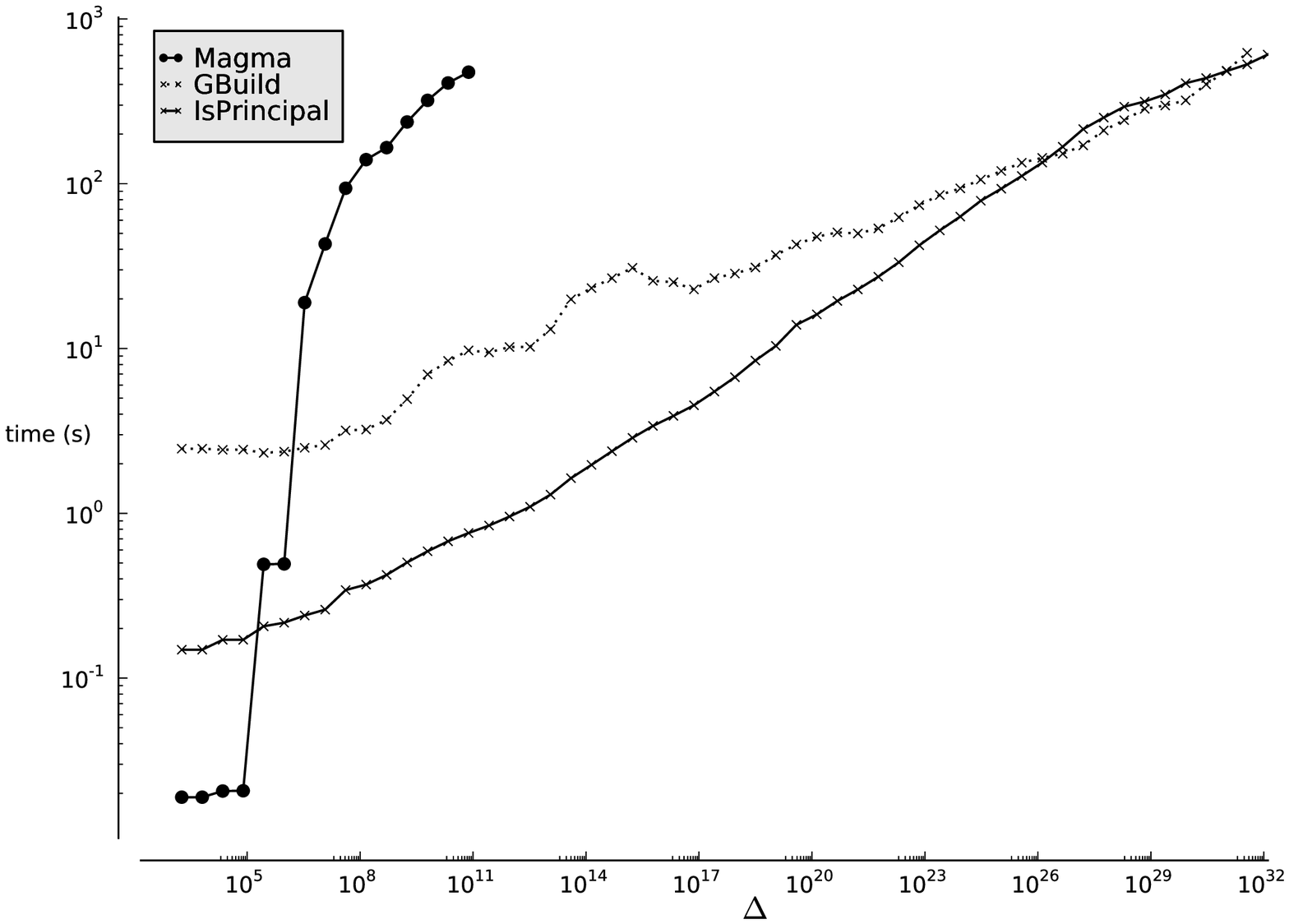}
\caption{Running time of the algorithms}\label{timings}
\end{figure}

\paragraph{Example 4.}In order to understand the practical behaviour of the algorithms, we conduct the following experiment. We draw algebras~$\alg$ and ideals~$I$ at random\footnote[2]{Let~$x$ be uniformly distributed in~$[0,70]$. This value controls the size of the discriminant. Let~$k$ be~$1$ or~$2$, each with probability~$1/2$. This is the number of prime factors of the discriminant of the algebra. Let~$t$ be uniformly distributed in~$[0,1]$. This value controls the part of the size of the discriminant coming from the base field or from the algebra. Let~$d$ be the smallest fundamental discriminant larger that~$\exp(tx/4)$, and let~$\fld=\Q(\sqrt{d})$.
Let~$\prm_1$ be the prime of~$\Z_\fld$ with smallest norm larger than~$\exp((1-t)x/2k)$, and if~$k=2$ let~$\prm_2$ be the prime with smallest norm larger than~$N(\prm_1)$. Let~$\alg$ be the quaternion algebra ramified exactly at~$\prm_i$ for~$i\le k$ and at~$k\,\mathrm{mod}\, 2$ real places of~$\fld$, and let~$\order$ be a maximal order in~$\alg$. Let~$\Delta = d^4N(\delta_\alg)^2$ be the absolute discriminant of~$\alg$. Let~$y$ be uniformly distributed in~$[0,1]$, and let~$\prm$ be the prime of~$\Z_\fld$ of smallest norm larger than~$y\Delta^{1/2}$, coprime to~$\delta_\alg$ and such that the class of~$\prm$ in~$\clalg$ is trivial.
Finally, let~$I$ be a random integral right $\order$-ideal of norm~$\prm$.}. In every random test case, we compute our reduction structure with Algorithm~\ref{gbuild} (\texttt{GBuild}), and we compute a generator of the ideal~$I$ with Algorithm~\ref{isprincipal} (\texttt{IsPrincipal}). We also compute a generator of~$I$ with the function provided by Magma. In every case, we interrupt any algorithm that takes more than~$1000$ seconds to terminate.
The result of~$15\,000$ such test cases is plotted in Figure~\ref{timings}: the discriminant~$\Delta$ and the time are both in logarithmic scale, and each plot~$(D,T)$ is such that~$T$ is the average of the running time of the algorithm over the discriminants~$\Delta\in[D/10,10D]$. We do not plot the running time when more than~$50\%$ of the executions were interrupted, since the corresponding value is no longer meaningful.


\begin{acknowledgements}
  I would like to thank Karim Belabas and Andreas Enge for helpful discussions and careful reading of early versions of this paper. I also want to thank an anonymous referee many comments and corrections and for suggesting a deterministic algorithm for computing a local generator and Pierre Lezowski for explaining to me Euclidean algorithms over matrix rings.
\end{acknowledgements}

\affiliationone{%
Aurel Page\\
Univ. Bordeaux, IMB, UMR 5251,\\
  F-33400 Talence, France.\\
  \email{aurel.page@math.u-bordeaux1.fr}}
\affiliationtwo{%
CNRS, IMB, UMR 5251,\\
  F-33400 Talence, France.}
\affiliationthree{%
INRIA,\\
  F-33400 Talence, France.}
%

\begin{thebibliography}{99}
\bibitem{bach}
  {\bibname E. Bach.}
  {Explicit bounds for primality testing and related problems.}
  {\emph{Math. Comp.} 55 (1990), no. 191, 355--380.}
%
\bibitem{hnf}
  {\bibname J.-F. Biasse \and C. Fieker.}
  {A polynomial time algorithm for computing the HNF of a module over the integers of a number field.}
  {\emph{37th International Symposium on Symbolic and Algebraic Computation (ISSAC~2012)}.}
%
\bibitem{magma}
  {\bibname W. Bosma, J. Cannon \and C. Playoust.}
  {The {M}agma algebra system. {I}. {T}he user language.}
  {\emph{J. Symbolic Comput.}, 24(3-4):235--265, 1997. Computational algebra and number theory (London, 1993).}
%
\bibitem{buchmann}
  {\bibname J. Buchmann.}
  {A subexponential algorithm for the determination of class groups and regulators of algebraic number fields.}
  {\emph{S\'eminaire de {T}h\'eorie des {N}ombres, {P}aris 1988--1989}, 27--41, Progr. Math., 91, Birkh\"auser Boston, Boston, MA, 1990.}
%
%
%
\bibitem{vd}
  {\bibname L. Demb\'el\'e \and J. Voight.}
  {Explicit methods for Hilbert modular forms.}
  {\emph{Elliptic curves, Hilbert modular forms and Galois deformations}, 135--198, Birkh\"auser, Basel, 2013.}
%
%
%
\bibitem{granville}
  {\bibname A. Granville.}
  {Smooth numbers: computational number theory and beyond.}
  {\emph{Algorithmic number theory: lattices, number fields, curves and cryptography}, 267--323, Math. Sci. Res. Inst. Publ., 44, \emph{Cambridge Univ. Press, Cambridge,} 2008.}
%
\bibitem{hmc}
  {\bibname J. L. Hafner \and K. S. McCurley.}
  {A rigorous subexponential algorithm for computation of class groups.}
  {\emph{J.~Amer. Math. Soc.} 2 (1989), no.~4, 837--850.}
%
\bibitem{lubo}
  {\bibname W. M. Kantor \and A. Lubotzky.}
  {The probability of generating a finite classical group.}
  {\emph{Geom. Dedicata} 36 (1990), no.~1, 67--87.}
%
\bibitem{fp}
  {\bibname U. Fincke \and M. Pohst.}
  {Improved methods for calculating vectors of short length in a lattice, including a complexity analysis.}
  {\emph{Math. Comp.}, 44(170):463--471, 1985.}
%
\bibitem{kannan}
  {\bibname R. Kannan.}
  {Improved algorithms for integer programming and related lattice problems.}
  {In \emph{Proceedings of the fifteenth annual ACM symposium on Theory of computing}, STOC '83, New York, NY, USA, 1983. ACM.}
%
\bibitem{katok}
  {\bibname S. Katok.}
  {Fuchsian groups.}
  {Chicago Lectures in Mathematics. \emph{University of Chicago Press, Chicago, IL}, 1992.}
%
\bibitem{kv}
  {\bibname M. Kirschmer \and J. Voight.}
  {Algorithmic enumeration of ideal classes for quaternion orders.}
  {\emph{SIAM J.~Comput.}, 39(5):1714--1747, 2010.}
%
\bibitem{rings}
  {\bibname J. Kl\"uners \and S. Pauli.}
  {Computing residue class rings and {P}icard groups of orders.}
  {\emph{J.~Algebra} 292 (2005), no.~1, 47--64.}
%
\bibitem{macreid}
  {\bibname C. Maclachlan \and A. W. Reid.}
  {The arithmetic of hyperbolic $3$-manifolds.}
  {Graduate Texts in Mathematics, 219. \emph{Springer-Verlag, New York,} 2003.}
%
\bibitem{euclmat}
  {\bibname I. N. Sanov.}
  {Euclid's algorithm and one-sided prime factorization for matrix rings.}
  {\emph{Sibirsk. Mat. \v{Z}.} \textbf{8} 1967 846--852.}
%
\bibitem{serretrees}
  {\bibname J.-P. Serre.}
  {Trees.}
  {Translated from the French original by John Stillwell. Corrected 2nd printing of the 1980 English translation. \emph{Springer Monographs in Mathematics}. Springer-Verlag, Berlin, 2003.}
%
\bibitem{mfv}
  {\bibname M.-F. Vign{\'e}ras.}
  {Arithm\'etique des alg\`ebres de quaternions (French).}
  {\emph{Lecture notes in Mathematics}, Vol.~800. Springer, Berlin, 1980.}
%
\bibitem{vfuchsian}
  {\bibname J. Voight.}
  {Computing fundamental domains for {F}uchsian groups.}
  {\emph{J. Th\'eor. Nombres Bordeaux}, 21(2):469--491, 2009.}
%
\bibitem{voightbook}
  {\bibname J. Voight.}
  {The arithmetic of quaternion algebras.}
  {\emph{In preparation.} Accessible from the url \url{http://www.math.dartmouth.edu/~jvoight/research.html\#books}.}
%
\bibitem{voll}
  {\bibname U. Vollmer.}
  {An accelerated {B}uchmann algorithm for regulator computation in real quadratic fields.}
  {\emph{Algorithmic number theory (Sydney, 2002)}, 148--162, Lecture Notes in Comput.~Sci., 2369, Springer, Berlin, 2002.}
\end{thebibliography}
\end{document}